\theoremstyle{plain}
\newtheorem{theorem}{Theorem}
\newtheorem{corollary}[theorem]{Corollary}
\newtheorem{lemma}[theorem]{Lemma}
\newtheorem{proposition}[theorem]{Proposition}
\theoremstyle{definition}
\newtheorem{definition}[theorem]{Definition}
\newtheorem{example}[theorem]{Example}
\theoremstyle{remark}
\newcommand{\N}{\mathbb{N}}
\title{The abelian complexity of infinite words and the Frobenius problem}
\author{
Ian Kaye\thanks{The author was supported by an NSERC USRA.}\ \ and Narad Rampersad\thanks{The author was supported by
  NSERC Discovery Grants 418646-2012 and RGPIN-2019-04111.}\\
Department of Mathematics and Statistics\\
University of Winnipeg\\
\url{n.rampersad@uwinnipeg.ca}
}
\date{\today}
\begin{document}
\maketitle

\begin{abstract}
We study the following problem, first introduced by Dekking.  Consider
an infinite word ${\bf x}$ over an alphabet $\{0,1,\ldots,k-1\}$ and a
semigroup homomorphism $S:\{0,1,\ldots,k-1\}^* \to \mathbb{N}$.  Let
$\mathcal{L}_{\bf x}$ denote the set of factors of ${\bf x}$.  What
conditions on $S$ and the abelian complexity of ${\bf x}$ guarantee
that $S(\mathcal{L}_{\bf x})$ contains all but finitely many elements
of $\mathbb{N}$?  We examine this question for some specific infinite
words ${\bf x}$ having different abelian complexity functions.
\end{abstract}

\section{Introduction}
It is well-known that if $a$ and $b$ are two co-prime positive
integers then all sufficiently large positive integers $n$ can be
written as a linear combination $n=xa+yb$, where $x$ and $y$ are
non-negative integers.  Frobenius posed the problem of determining the
largest positive integer that cannot be so represented; Sylvester
\cite{Syl84} was
the first to give a solution to Frobenius' problem: he showed that the
largest non-representable number is
\begin{equation}\label{frob_num}
  ab-a-b.
\end{equation}
Ram\'irez Alfons\'in \cite{RA05} has written a monograph devoted entirely to this
problem.

Dekking \cite{dekking} studied the following variation of this problem.  Let $S :
\{0,1\}^* \to \N$ be a \emph{semigroup homomorphism}: i.e., there are
non-negative integers $a$ and $b$ such that $S$ is defined by
$S(0)=a$, $S(1)=b$, and $S(uv) = S(u)+S(v)$ for any words $u$ and $v$
over the binary alphabet $\{0,1\}$. Given an infinite word ${\bf w}$
over the alphabet $\{0,1\}$, let $\mathcal{L}_{\bf w}$ denote the set
of all factors of ${\bf w}$ and let $\mathcal{L}_{n,\mathbf{w}}$ denote the set
of all length-$n$ factors of ${\bf w}$.  Define $$S(\mathcal{L}_{\bf w}) =
\{S(u) : u \in \mathcal{L}_{\bf w}\}.$$ What conditions on ${\bf w}$
and $S$ ensure that $S(\mathcal{L}_{\bf w})$ is \emph{co-finite}
(contains all but finitely many elements of $\N$)?

Certainly $a$ and $b$ must be co-prime (and so we will assume this to
be the case for the remainder of the paper).  The set
$S(\mathcal{L}_{\bf w})$ is closely related to the \emph{abelian
  complexity} \cite{zamboni} of ${\bf w}$ (as well as the \emph{additive
  complexity} \cite{ABJS12} of ${\bf w}$).  For any word $u$ over an
alphabet $A$, we write $|u|_a$ to denote the number of occurrences of
a letter $a\in A$ in the word $u$ and $|u|$ to denote the length of
$n$.  If $A=\{a_1,\ldots,a_k\}$, the \emph{Parikh vector} of $u$ is
the vector $\psi(u)$ whose $i$-th entry equals $|u|_{a_i}$.  Let
$A=\{0,1\}$.  For any $n$, we have $n \in S(\mathcal{L}_{\bf w})$
exactly when there is a factor $u$ of ${\bf w}$ such that $n=xa+yb$
and $\psi(u)=(x,y)$.  The \emph{abelian complexity function} of ${\bf
  w}$ is the function $\rho_{\bf w}(n)$ that maps $n$ to the
cardinality of the set $\psi(\mathcal{L}_{n,\mathbf{w}})$.  If
$\psi(\mathcal{L}_{n,\mathbf{w}}) = \{(0,n),(1,n-1),\ldots,(n-1,1),(n,0)\}$ for all $n$
(i.e, $\rho_{\bf w}(n)=n+1$), then ${\bf w}$ has \emph{maximal abelian
  complexity} and it is clear that in this case $S(\mathcal{L}_{\bf
  w})$ is co-finite.  Indeed, in this case the problem is the
classical one stated by Frobenius.  On the other hand, for words with
lower abelian complexity functions, this may not be the case.

Dekking studied the case where ${\bf w}$ is a \emph{Sturmian word}.
Sturmian words are the aperiodic words with the smallest possible
abelian complexity; i.e., if ${\bf w}$ is an aperiodic binary word
then ${\bf w}$ is Sturmian if and only if $\rho_{\bf w}(n) = 2$ for
all $n \geq 1$ \cite{CH73}.  Dekking gave an explicit formula for
$S(\mathcal{L}_{\bf w})$ for any Sturmian word ${\bf w}$; this formula
implies that for any given ${\bf w}$ there are only finitely many maps
$S$ such that $S(\mathcal{L}_{\bf w})$ is co-finite.  For the
Fibonacci word, Dekking characterized exactly the set of such maps
$S$.  Given the close relationship between Sturmian words and Beatty
sequences, we also mention the work of Steuding and Stumpf \cite{SS17}
concerning the Frobenius problem and Beatty sequences.

The general question we are interested in then is, ``What conditions
on the abelian complexity of ${\bf w}$ are sufficient to ensure that 
$S(\mathcal{L}_{\bf w})$ is co-finite for all maps $S$?''  (Remember,
we are assuming that $S(0)$ and $S(1)$ are relatively prime.)  
If $S(\mathcal{L}_{\bf w})$ is co-finite for all maps $S$, we say that
${\bf w}$ has the \emph{Frobenius property}.  As previously noted, if
${\bf w}$ has maximal abelian complexity, then ${\bf w}$ has the
Frobenius property, and if ${\bf w}$ is Sturmian, then ${\bf w}$ does
not have the Frobenius property.  In this paper we analyze some
example of words ${\bf w}$ whose abelian complexity is intermediate
between these two extremes.

Finally, we note that the Frobenius problem can be extended from two
given positive integers $a$ and $b$ to any number of given positive
integers.  Similarly, we can extend the notions defined above to words
over larger alphabets.  Recall that Dekking studied
$S(\mathcal{L}_{\bf w})$ for Sturmian words ${\bf w}$, which are
infinite binary words with constant abelian complexity $\rho_{\bf
  w}(n)=2$.  We examine $S(\mathcal{L}_{\bf t})$ for a certain infinite
ternary word ${\bf t}$ with constant abelian complexity $\rho_{\bf
  t}(n)=3$.

To summarize, in the next sections we study:
\begin{itemize}
  \item the \emph{paperfolding word} ${\bf pf}$, which has abelian
    complexity $\rho_{\bf pf}(n) = O(\log n)$; this word does not have
    the Frobenius property.
  \item a pure morphic binary word $\Phi$ with abelian complexity
    $\rho_\Phi(n) = \Theta(n^{\log_5 2})$; this word has the
    Frobenius property.
  \item a balanced ternary word ${\bf t}$ with abelian complexity
    $\rho_{\bf t}(n) = 3$ for all $n \geq 1$; this word does not have
    the Frobenius property.
\end{itemize}

\section{The paperfolding word}
In this section we examine whether the \emph{(ordinary) paperfolding
  word} has the Frobenius property.  This is a word whose abelian
complexity function is unbounded, unlike that of the Sturmian words.
For a nice introduction to the paperfolding words and their
properties, see the series of papers by Dekking, Mend\`es France, and
Poorten \cite{DMP82}.  There are a number of equivalent definitions of
the paperfolding word ${\bf pf}$.  If $w=w_1 w_2 \ldots w_k$ is a word
over $\{0,1\}$ then the \emph{complement} of $w$ is the word
$\overline{w} = (1-w_1)(1-w_2)\ldots(1-w_k)$ and the \emph{reversal}
of $w$ is the word $w^R = w_k w_{k-1} \ldots w_1$.  The word ${\bf
  pf}$ may be constructed as the limit of the following process: Let
$f^{(1)} = 0$. Having constructed $f^{(n)}$, we define $f^{(n+1)}:=
f^{(n)} \; 0 \; \overline{f^{(n)}}^R$.  Then ${\bf pf} = \lim_{n \to
  \infty}f^{(n)}$.

The next construction of the paperfolding word is known as the
\emph{Toeplitz construction}.  We begin with a sequence of empty
spaces and fill every \textit{second} space with the alternating
sequence $ (01)^\omega$. After infinitely many repetitions of this
process, we obtain the ordinary paperfolding word ${\bf pf}$. Beginning with
$ \_ \ \_ \ \_ \; \_ \; \_ \; \_ \; \ldots $, the first few steps in
this process are

\begin{center}
\_ \;  \_ \; \_ \; \_ \; \_ \; \_ \; \_ \;  \_ \; \_ \; \_ \; \_ \; \_ \;  \ldots \\

0 \;  \_ \; 1 \; \_ \; 0 \; \_ \; 1 \;  \_ \; 0 \; \_ \; 1 \; \_ \;  \ldots \\

0 \;  0 \; 1 \; \_ \; 0 \; 1 \; 1 \;  \_ \; 0 \; 0 \; 1 \; \_ \;  \ldots \\

0 \;  0 \; 1 \; 0 \; 0 \; 1 \; 1 \;  \_ \; 0 \; 0 \; 1 \; 1 \;  \ldots \\

0 \;  0 \; 1 \; 0 \; 0 \; 1 \; 1 \;  0 \; 0 \; 0 \; 1 \; 1 \;  \ldots \\

\end{center}

This construction implies the following recursive definition of ${\bf
  pf} = (f_n)_{n \geq 1}$:
\begin{equation}\label{rec_pf}
 (f_{2n-1})_{n \geq 1} = (01)^\omega \text{ and } (f_{2n})_{n \geq
  1} = {\bf pf}.
\end{equation}

We may also define the $n$-th term $f_n$ of ${\bf pf}$ from the binary
representation of $n$. Let $n = m \cdot 2^j$ be given, where $m$ is odd. Then
define
$$f_n =
\begin{cases}
0 & \mbox{ if } m \equiv 1 \pmod{4} \\
1 & \mbox{ if } m \equiv 3 \pmod{4}.
\end{cases}
$$

Madill and Rampersad \cite{blake} studied the abelian complexity of
${\bf pf}$.  They proved that $\rho_{\bf pf}(n) = O(\log n)$; however,
it is also the case that $\rho_{\bf pf}$ takes the value $3$ infinitely
often.  In particular, we have
\begin{equation}\label{rho_eq_3}
  \rho_{\bf pf}(2^n) = 3 \text{ for } n \geq 1.
\end{equation}
This can be proved by induction on $n$, using \cite[Claim~5]{blake}
(which states that $\rho_{\bf pf}(4m) = \rho_{\bf pf}(2m)$).  As we
will see, these low values of the abelian complexity function prevent
${\bf pf}$ from having the Frobenius property.

We define $\Delta : \mathcal{L}_{\bf pf} \to \mathbb{Z}$ by
$\Delta(w) = |w|_0 - |w|_1$ and $M: \mathbb{N} \to \mathbb{Z}$ by
$M(n) = \max\{ \Delta(\mathcal{L}_{n,\bf pf}) \}$.

\begin{example}
For $n=2$ we have $\mathcal{L}_{n,\bf pf} = \left \{ 00, 01, 10, 11 \right \}$, $\psi(\mathcal{L}_{n,\bf pf}) = \left \{ (2,0), (1,1), (0,2) \right \}$, $\Delta(\mathcal{L}_{n,\bf pf}) = \left \{ 2, 0, -2 \right \}$, and  $M(n) = 2$.
\end{example}

Note that for any $w \in \mathcal{L}_{n,\bf pf}$ we have $\overline{w}^R \in \mathcal{L}_{n,\bf pf}$, so
$-M(n) \leq \Delta(w) \leq M(n)$.  We need the following two facts
\cite[Claims~3 and 4 (and their proofs)]{blake}:
\begin{eqnarray}
  \rho(n) &= M(n)+1\label{claim3}\\
  M(n+1) &= M(n)\pm 1. \label{claim4}
\end{eqnarray}

\begin{lemma} \label{nope}
For $n\geq 2$, the Parikh vectors $\left(2^{n-1} \pm 2, 2^{n-1} \mp 2
\right)$ do not occur in $\psi(\mathcal{L}_{2^n,\bf pf})$.
\end{lemma}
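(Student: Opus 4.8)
The plan is to reduce the statement to the already-known value of the maximal discrepancy function $M$ on factors of length $2^n$. First I would recall from \eqref{rho_eq_3} that $\rho_{\bf pf}(2^n) = 3$ for every $n \geq 1$, and then feed this into \eqref{claim3}, which says $\rho(n) = M(n)+1$. This yields $M(2^n) = \rho_{\bf pf}(2^n) - 1 = 2$ for all $n \geq 1$, and in particular for the range $n \geq 2$ of interest.

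Next I would invoke the complement-reversal symmetry noted just before the lemma: since $\overline{w}^R \in \mathcal{L}_{2^n,\bf pf}$ whenever $w \in \mathcal{L}_{2^n,\bf pf}$, and $\Delta(\overline{w}^R) = -\Delta(w)$, every length-$2^n$ factor satisfies $-M(2^n) \leq \Delta(w) \leq M(2^n)$, i.e. $-2 \leq \Delta(w) \leq 2$. Thus $\Delta$ can only attain values in $\{-2,0,2\}$ on length-$2^n$ factors. Finally I would compute the discrepancy of each candidate vector: a Parikh vector $(x,y)$ with $x+y = 2^n$ has $\Delta = x - y$, so $\left(2^{n-1}+2,\, 2^{n-1}-2\right)$ has $\Delta = 4$ and $\left(2^{n-1}-2,\, 2^{n-1}+2\right)$ has $\Delta = -4$. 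Both values lie strictly outside $[-2,2]$, so neither vector can be the Parikh vector of a factor in $\mathcal{L}_{2^n,\bf pf}$, which is exactly the assertion. (The hypothesis $n \geq 2$ is precisely what is needed for both entries $2^{n-1}\pm 2$ to be non-negative, so that the vectors are genuine Parikh vectors; at $n=1$ they would read $(3,-1)$ and $(-1,3)$.)

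There is no genuinely hard analytic step here: all the content is packaged into the cited facts \eqref{rho_eq_3} and \eqref{claim3} together with the symmetry of the $\Delta$-range. The only point deserving a line of care is confirming that \eqref{rho_eq_3} holds over the full range needed, which is the induction on $n$ indicated after that equation — it uses $\rho_{\bf pf}(4m) = \rho_{\bf pf}(2m)$ from \cite[Claim~5]{blake}, with the base case $\rho_{\bf pf}(2) = 3$ verified directly from the length-$2$ factors $\{00,01,10,11\}$. I therefore expect the write-up to be very short, essentially the three displayed inequalities $M(2^n)=2$, $|\Delta(w)|\leq 2$, and $|{\pm}4| > 2$.
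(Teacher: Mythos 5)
Your proof is correct, but it takes a genuinely different route from the paper's. The paper argues by positive identification: starting from the fact that $(1,3)$, $(2,2)$, $(3,1)$ are all in $\psi(\mathcal{L}_{4,\bf pf})$, it applies the recursive definition \eqref{rec_pf} inductively to show that $\left(2^{n-1}\pm 1, 2^{n-1}\mp 1\right)$ and $(2^{n-1},2^{n-1})$ lie in $\psi(\mathcal{L}_{2^n,\bf pf})$; since \eqref{rho_eq_3} says there are exactly three Parikh vectors at length $2^n$, these must be all of them, and the vectors $\left(2^{n-1}\pm 2, 2^{n-1}\mp 2\right)$ are thereby excluded. You instead argue by exclusion: \eqref{rho_eq_3} combined with \eqref{claim3} gives $M(2^n)=2$, the complement-reversal symmetry stated just before the lemma gives $-M(2^n)\leq \Delta(w)\leq M(2^n)$ for every length-$2^n$ factor $w$, and the two candidate vectors have $|\Delta|=4>2$. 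Both arguments are sound and both hinge on \eqref{rho_eq_3}; yours avoids the induction on the recursive structure entirely, proves the slightly stronger statement that \emph{no} Parikh vector $(x,y)$ with $|x-y|>2$ occurs at length $2^n$, and is essentially the same discrepancy technique the paper itself deploys later in the proof of Theorem~\ref{toobig}, where the key inequality is $|\Delta(w)|+1 \leq M(|w|)+1 = \rho(|w|)$. What the paper's version buys in exchange is an explicit description of which three Parikh vectors do occur at length $2^n$, although that extra information is not used in the sequel.
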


\begin{proof}
Since $(1,3)$, $(2,2)$, $(3,1)$ are all elements of $\psi(\mathcal{L}_{4,\bf pf})$, we
can apply the recursive definition \eqref{rec_pf} inductively to show
that $\left(2^{n-1} \pm 1, 2^{n-1} \mp 1 \right)$ and
$(2^{n-1},2^{n-1})$ are elements of $\psi(\mathcal{L}_{2^n,\bf pf})$.  From
\eqref{rho_eq_3}, we see that these three vectors are the only vectors
in $\psi(\mathcal{L}_{2^n,\bf pf})$, which establishes the claim.
\end{proof}

\begin{theorem} \label{toobig}
If $S(0)=a$ and $S(1)=b$ and $4 \leq a < b$ then $\mathbb{N} \setminus
S(\mathcal{L}_{\bf pf})$ is an infinite set.  In particular, the word
${\bf pf}$ does not have the Frobenius property.
\end{theorem}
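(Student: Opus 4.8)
The plan is to pass to the coordinates $(m,\Delta)$, where $m=|u|$ and $\Delta=\Delta(u)=|u|_0-|u|_1$: a factor $u$ of length $m$ with $\Delta(u)=\Delta$ has $|u|_0=(m+\Delta)/2$ and $|u|_1=(m-\Delta)/2$, so
\[
  S(u)=\frac{(a+b)m-(b-a)\Delta}{2}.
\]
Writing $s=a+b$ and $d=b-a$, an integer $N$ lies in $S(\mathcal{L}_{\bf pf})$ if and only if there is a length $m$ and a value $\Delta$ with $sm-d\Delta=2N$, $\Delta\equiv m\pmod 2$, $|\Delta|\le M(m)$, and $\Delta$ actually attained by a factor of length $m$. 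I would target the value forbidden by Lemma~\ref{nope}: the Parikh vector $(2^{n-1}-2,2^{n-1}+2)$ (that is, $\Delta=-4$ at length $2^n$) does not occur, so I set $N_n=(a+b)2^{n-1}+2(b-a)$, the value such a factor would produce. The goal is then to show $N_n\notin S(\mathcal{L}_{\bf pf})$ for all $n\ge 2$, which yields infinitely many missing values.

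Next I would localize the candidate lengths. Solving $sm-d\Delta=2N_n$ at $m=2^n+j$ gives $\Delta=(sj-4d)/d$, so reachability forces $d\mid sj$. Since $\gcd(s,d)=\gcd(a+b,b-a)$ divides $2$, in the principal case $\gcd(s,d)=1$ this means $d\mid j$; writing $j=td$ we obtain the single family of candidate lengths $m=2^n+td$ with $\Delta=st-4$. For $t=0$ this is $\Delta=-4$, excluded by Lemma~\ref{nope} (equivalently $|\Delta|=4>2=M(2^n)$). For $t\ne 0$, iterating \eqref{claim4} from $M(2^n)=2$ gives $M(2^n+td)\le 2+|t|d$, and the inequality $|st-4|>2+|t|d$ holds for every nonzero $t$: for $t>0$ it is equivalent to $2a\,t>6$, which is true because $a\ge 4$, and for $t<0$ it is automatic since $s>d$. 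Thus no factor realizes $N_n$, and $N_n\notin S(\mathcal{L}_{\bf pf})$; this is exactly where the hypothesis $a\ge 4$ is used.

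The main obstacle is the residual case $\gcd(s,d)=2$, which forces $a,b$ both odd (so $a\ge 5$) and relaxes the divisibility constraint to $(d/2)\mid j$. The dangerous candidate is then $j=d/2$, giving $|\Delta|=(a+b)/2-4$ against the crude bound $M(2^n+d/2)\le 2+(b-a)/2$; here $(a+b)/2-4>2+(b-a)/2$ only reduces to $a>6$, so the estimate is too weak for $a=5$. To close this case I expect to need sharper information about the abelian complexity immediately around $2^n$ than the one-step bound \eqref{claim4} provides—either the exact values of $M(2^n\pm 1),M(2^n\pm 2),\dots$ extracted from the self-similar recursion \eqref{rec_pf} (as in the proof of Lemma~\ref{nope}), which should show that $M(2^n+d/2)$ is in fact smaller than $2+(b-a)/2$, or a slightly different target value whose divisibility constraint again spaces the candidate lengths $d$ apart. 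Once these borderline lengths are eliminated, the conclusion that $\N\setminus S(\mathcal{L}_{\bf pf})$ contains $N_n$ for infinitely many $n$—and hence that ${\bf pf}$ lacks the Frobenius property—follows at once.
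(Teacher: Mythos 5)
Your argument is, in substance, the paper's own proof written in $(m,\Delta)$-coordinates: the paper targets the same numbers $N_n=a(2^{n-1}-2)+b(2^{n-1}+2)$, parametrizes the competing representations by an integer $t$, and combines \eqref{claim3}, \eqref{claim4} and \eqref{rho_eq_3} into the inequality $|t(a+b)-4|+1\le 3+|t|(b-a)$, which it then refutes for all $t\neq 0$ exactly as you do ($t<0$ is automatic, $t>0$ uses $a\ge 4$). The one place you diverge---and the one place your write-up is left open---is the residual case $\gcd(a+b,b-a)=2$.

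That residual case is not a genuine obstacle: it is vacuous, and the fact that kills it is one you yourself listed among the reachability conditions but never invoked, namely the parity requirement $\Delta\equiv m\pmod 2$ (equivalently, $|u|_0=(m+\Delta)/2$ must be an integer). Suppose $a,b$ are both odd, so $s=a+b$ and $d=b-a$ are even with $\gcd(s/2,d/2)=1$. Your dangerous candidates have $j=u\,d/2$ with $u$ odd, hence $m=2^n+u\,d/2\equiv u\,d/2\pmod 2$ and $\Delta=(s/2)u-4\equiv (s/2)u\pmod 2$; since $s/2-d/2=a$ is odd, $\Delta$ and $m$ have opposite parities for every odd $u$, so such a ``factor'' would contain a non-integral number of $0$'s and cannot exist. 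The admissible lengths are therefore again exactly $j=td$, and your main-case inequality (which for $t>0$ needs only $at>3$, true in particular for $a=5$) closes the proof; no sharper information about $M$ near $2^n$ is needed. This is also why the paper never sees the case split: it parametrizes representations as the integer solutions $(x,y)=(2^{n-1}-2+tb,\,2^{n-1}+2-ta)$, $t\in\mathbb{Z}$, of $ax+by=N_n$ (all solutions have this form because $\gcd(a,b)=1$), so the Parikh coordinates stay integral by construction and the candidate lengths are $2^n+t(b-a)$ from the outset. With that one observation added, your proof is correct and coincides with the paper's.
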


\begin{proof}
Suppose that $2\leq a < b$ and consider a positive integer $m$ with
representation $m = a \cdot (2^{n-1}-2)+b \cdot (2^{n-1}+2)$ for some
(large) $n$. By Lemma~\ref{nope}, ${\bf pf}$ does not contain any
factor with Parikh vector $(2^{n-1}-2, 2^{n-1}+2)$, so so we must look
for another representation $m = a \cdot (2^{n-1}-2 +tb) +b \cdot
(2^{n-1}+2-ta)$ for some non-zero integer $t$.  This representation
will correspond to a factor $w$ of length $|w| = 2^n + t(b-a)$ with
Parikh vector $(u,v) = (2^{n-1}-2 +tb, 2^{n-1}+2-ta)$. Then $\Delta(w)
= u - v = t(b+a)-4$. Now by \eqref{claim3}, we have
\begin{equation} \label{gib1}
|t(a+b)-4| +1 = |\Delta(w)| + 1 \leq M\left(2^n + t(b-a) \right) +1 = \rho (2^n + t(b-a))
\end{equation}
Furthermore, by \eqref{claim3}--\eqref{claim4}, we have $\rho(|w|+1)
\leq \rho(|w|)+1$, which implies
\begin{equation} \label{gib2} \rho (2^{n-1} +t(b-a)) \leq \rho(2^{n-1}) + |t|(b-a) = 3 + |t|(b-a). 
\end{equation}

The inequalities \eqref{gib1} and \eqref{gib2} give
\begin{equation}\label{gib3}
  |t(a+b)-4|+1 \leq 3 + |t|(b-a).
\end{equation}
If $t<0$ we get a contradiction immediately, since $|t(a+b)-4| =
|t|(a+b)+4$ and \eqref{gib3} becomes $a|t| \leq -1$, which is
impossible.  If $t>0$ we have $|t(a+b)-4| = |t|(a+b)-4$ (since $a+b
\geq 4$), and \eqref{gib3} becomes $a|t| \leq 3$.  Since $t\geq 1$
we find that $a \leq 3$.  We conclude that if $a \geq 4$, there are
infinitely many $m \notin S(\mathcal{L}_{\bf pf})$.
\end{proof}

\section{A binary word with abelian complexity $\Theta(n^{\log_5 2})$}

In the last section we saw that the ordinary paperfolding word ${\bf
  pf}$ does not have the Frobenius property, and that in this case
this is due to the fact that $\lim\inf_{n \to \infty} \rho_{\bf
  pf}(n)$ is bounded.  This suggests that to produce an (interesting)
example of an infinite word with the Frobenius property, we should
consider a word $\Phi$ with less than maximal abelian complexity but
for which
$$\lim\inf_{n \to \infty} \rho_{\bf \Phi}(n) = \infty.$$

Let $\phi := \left\{ 0,1 \right \}^* \to \left\{ 0,1 \right \}^*$ be
the morphism that sends $0 \to 00101$ and $1 \to 11011$. Let $\Phi$ be
the fixed point of $\phi$ that starts with $0$: that is, let $\Phi =
\phi^\omega(0) = \lim_{n \to \infty} \phi^n(0).$

For a general morphism $h:\{0,1,\ldots,k-1\}^* \to
\{0,1,\ldots,k-1\}^*$ we define the \emph{incidence matrix of $h$} as
the matrix $M_h$ whose $i^{th}$ column is the Parikh vector of $h(i)$.
Blanchet-Sadri et al.~\cite{asymptoticcomplexity} conducted an
extensive study of the asymptotic abelian complexities of binary words
generated by iterating morphisms.  We will make use of several ideas
from their paper in this section.  Following the notation of
\cite{asymptoticcomplexity}, we will use $z(u)$ to denote the number
of zeroes that appear in the factor $u$. Let $z_0 = z(\phi(0))$ and
$z_1 = z(\phi(1))$. We will also use $z_M(n)$ (resp. $z_m(n)$) to
denote the maximum (resp. minimum) number of zeroes among factors of
length $n$ in $\Phi$. The \emph{difference} and \emph{delta} functions
are defined in \cite{asymptoticcomplexity} for a general $\ell$-uniform
morphism; for our morphism $\phi$ we have
$d = |z_0 - z_1|= 2$ and $\Delta = z_M(\ell) - \max\left\{ z_0, z_1
\right \}= 3-3=0$.

\begin{example}
For $\phi$ as defined above, we have $\Phi = 0010100101110110010111011\cdots$, $z_0 = 3$, $z_1 = 1$, $d = 2$, $\Delta =0$, $z_m(2) = 0$, $z_M(2)=2$,
and 
\[
M_\phi = 
\begin{bmatrix}
3 & 1 \\
2 & 4
\end{bmatrix}.
\]
\end{example}

From \cite[Theorem~7]{asymptoticcomplexity} we get that
$\rho_{\Phi}(n) = \Theta(n^{\log_5 2})$, which is certainly not
maximal.  The following is the main result of this section.

\begin{theorem}\label{mainmorphic}
  The word $\Phi$ has the Frobenius property.
\end{theorem}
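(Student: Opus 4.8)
The plan is to show directly that every sufficiently large integer $N$ lies in $S(\mathcal{L}_\Phi)$. Assume without loss of generality that $a<b$ (if $a=b$ then coprimality forces $a=b=1$, and $S(u)=|u|$ makes $S(\mathcal{L}_\Phi)$ trivially co-finite). Writing a factor $u$ of length $\ell$ with $z(u)=x$ zeros, we have $S(u)=xa+(\ell-x)b$, so $S(\mathcal{L}_\Phi)$ is exactly the set of values $xa+yb$ where $(x,y)$ ranges over the Parikh vectors of factors. Since $a$ and $b$ are coprime, for large $N$ the integer $N$ has many non-negative representations $N=xa+yb$, namely $(x,y)=(x_0+tb,\,y_0-ta)$ for a range of integers $t$; these lie on a line of negative slope in the $(x,y)$-plane, whereas the Parikh vectors of $\Phi$ cluster around the ray $y=2x$ determined by the zero-frequency $1/3$ (the dominant eigendirection of $M_\phi$). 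The idea is to pick the representation closest to this ray and show it is an admissible Parikh vector.

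First I would record two structural facts about the zero-counts of $\Phi$. The key qualitative input is that, for each $n$, the set of attainable zero-counts is the \emph{full} interval $\{z_m(n),z_m(n)+1,\ldots,z_M(n)\}$. This follows from a sliding-window argument: the length-$n$ factors of $\Phi$ are precisely its windows of $n$ consecutive letters, and advancing a window by one position deletes one letter and appends one letter, so the zero-count changes by at most $1$; since the extreme values $z_m(n)$ and $z_M(n)$ are attained, a discrete intermediate-value argument shows every intermediate value is attained as well. The second, quantitative, input is that both one-sided deviations grow: $z_M(n)-n/3\to\infty$ and $n/3-z_m(n)\to\infty$. This is governed by the subdominant eigenvalue $2$ of $M_\phi$; for instance the factors $\phi^k(0)$ and $\phi^k(1)$, of length $5^k$, have zero-counts $5^k/3+\tfrac{2}{3}2^k$ and $5^k/3-\tfrac{1}{3}2^k$ respectively, exhibiting deviations of order $n^{\log_5 2}$ on both sides.

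With these in hand the main computation is short. For a representation $(x,y)=(x_0+tb,\,y_0-ta)$ the length is $\ell=\ell_0+t(b-a)$, where $\ell_0=x_0+y_0$, and one computes
\[
  x-\frac{\ell}{3}=\left(x_0-\frac{\ell_0}{3}\right)+t\cdot\frac{2b+a}{3},
\]
so as $t$ ranges over $\mathbb{Z}$ this quantity moves through an arithmetic progression of step $(2b+a)/3$; hence I can choose $t$ with $|x-\ell/3|\le (2b+a)/6=:C$, a constant depending only on $a,b$. Because $N=xa+yb\approx \ell(a+2b)/3$, this choice forces $\ell\to\infty$ as $N\to\infty$, and for $\ell$ large the growth of the one-sided deviations gives $z_m(\ell)\le \ell/3-C$ and $z_M(\ell)\ge \ell/3+C$. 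Combined with the interval property, $x$ then lies in $\{z_m(\ell),\ldots,z_M(\ell)\}$, so $(x,\ell-x)$ is the Parikh vector of an actual factor $u$ of $\Phi$ with $S(u)=N$. Thus all large $N$ lie in $S(\mathcal{L}_\Phi)$, and $\Phi$ has the Frobenius property.

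I expect the main obstacle to be the quantitative two-sided deviation bound: one must ensure that $\ell/3-z_m(\ell)$ and $z_M(\ell)-\ell/3$ both tend to infinity for \emph{all} large $\ell$, not merely along the subsequence $\ell=5^k$ where the explicit blocks $\phi^k(0),\phi^k(1)$ live. Interpolating to arbitrary lengths will require either extracting uniform estimates on $z_M(n)$ and $z_m(n)$ from the machinery of \cite{asymptoticcomplexity} or a direct argument propagating the deviations of $\phi^k(0)$ and $\phi^k(1)$ to nearby lengths via their prefixes and concatenations.
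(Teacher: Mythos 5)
Your overall architecture matches the paper's proof: the paper likewise combines (i) the fact that the zero-counts of length-$n$ factors fill the whole interval $[z_m(n),z_M(n)]$ (cited there as Lemma~2.1 of \cite{zamboni}), (ii) a uniform two-sided deviation bound of the form $z_M(n)\ge n/3+C$ and $z_m(n)\le n/3-C$ for all large $n$, and (iii) a choice of the representation of $N$ whose Parikh vector lies nearest the frequency-$\frac{1}{3}$ ray (the paper rounds $t_0=(2x-y)/(2b+a)$ in Lemma~\ref{morphicdriver}; your arithmetic-progression formulation giving $|x-\ell/3|\le(2b+a)/6$ is the same step, in fact stated more cleanly and without the paper's case split on $a>b$ versus $a<b$).

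However, there is a genuine gap, and it is exactly the one you flag at the end: ingredient (ii) is asserted but not proved. Your eigenvector computation certifies the deviations only along the subsequence $n=5^k$, and no soft argument bridges consecutive powers: $z_M$ is monotone, but over the range $5^k\le n\le 5^{k+1}$ the target $n/3$ grows by $\frac{4}{3}\cdot 5^k$ while the certified deviation is only of order $2^k$, so monotonicity is useless there. The missing statement is precisely Proposition~\ref{n_over_3_plus_C} of the paper (with explicit threshold $N_C=132\cdot 5^{C-4}$), and it is where essentially all of the paper's technical work lies: a double induction on $C$ and on the scale $j$, driven by the self-similar inequality $z_M(5k+r)\ge 2z_M(k+1)+k-2$ (imported from the proof of Proposition~18 of \cite{asymptoticcomplexity}) together with a computer-verified base case covering $29\le n\le 145$. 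Your closing sketch of how to interpolate (``propagating the deviations \dots via prefixes and concatenations'') is the right idea --- the image $\phi(w)$ of a factor of length $k$ with $z(w)$ zeros has $2z(w)+k$ zeros, so deviations double as lengths quintuple --- but making this work for all residues in $n=5k+r$, $0\le r\le 4$, requires the boundary-correction terms and the induction that the paper carries out; until that is supplied the proof is incomplete. (A minor further point: your ``without loss of generality $a<b$'' is not actually justified, since $\Phi$ is not invariant under exchanging $0$ and $1$; but as your argument never uses $a<b$, this is cosmetic.)
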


We need a preliminary result.  In the proof of this result, and again
later in this section, we will need to determine, by computer search,
the Parikh vectors of all factors of $\Phi$ of length $r$ for $r$ up
to some specified bound.  In order to perform this computation we make
use of the following fact:
\begin{quotation}
  If $r \leq 5^t$ for some $t \in \mathbb{N}$, then
  each factor of $\Phi$ of length $r$ appears in some $\phi^{t}(x)$,
  where $|x|=2$.
\end{quotation}
We also note that when performing such a computation there is no need
to save all Parikh vectors for factors of length $r$: indeed, by
\cite[Lemma~2.1]{zamboni}, the Parikh vectors of factors of length
$r$ in $\Phi$ are completely determined by the pair $(z_m(r),
z_M(r))$.

\begin{proposition}\label{n_over_3_plus_C}
For each integer $C\geq 4$, define $N_C = 132\cdot 5^{C-4}$.  Then
\begin{enumerate}
\item $z_M(n)\geq \frac{n}{3} +C$ whenever $n\geq N_C$ and
\item $z_m(n)\leq \frac{n}{3} -C$ whenever $n\geq N_C$.
\end{enumerate}
\end{proposition}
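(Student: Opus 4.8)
The plan is to prove both inequalities by induction on $C$, using the doubling behaviour of $\phi$ as the engine. The central observation is a counting identity: if $w$ is any factor of $\Phi$ of length $m$, then $\phi(w)$ is again a factor of $\Phi$ (since $\Phi=\phi(\Phi)$), it has length $5m$, and because $\phi(0)=00101$ contributes $z_0=3$ zeroes while $\phi(1)=11011$ contributes $z_1=1$ zero, we get
\[
  z(\phi(w)) = 3\,z(w) + \bigl(m - z(w)\bigr) = 2\,z(w) + m .
\]
Thus $\phi$ sends a factor whose zero-count deviates from the mean value $m/3$ by $e$ to a factor of five times the length whose deviation is $2e$: the excess (or deficit) doubles while the length quintuples. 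This is the mechanism behind the $\Theta(n^{\log_5 2})$ growth, and it is what I would exploit.

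First I would record two recursions coming from this identity. Choosing $w$ to realize $z_M(\lfloor n/5\rfloor)$ and extending $\phi(w)$ inside $\Phi$ to length $n$ (an extension can only add zeroes, and it is possible because $\Phi$ is infinite) yields
\[
  z_M(n) \ \ge\ 2\,z_M\!\bigl(\lfloor n/5\rfloor\bigr) + \lfloor n/5\rfloor .
\]
Symmetrically, choosing $w$ to realize $z_m(\lceil n/5\rceil)$ and truncating $\phi(w)$ (which can only remove zeroes) to a length-$n$ sub-block gives
\[
  z_m(n) \ \le\ 2\,z_m\!\bigl(\lceil n/5\rceil\bigr) + \lceil n/5\rceil .
\]
With these in hand the inductive step is pure arithmetic: assuming part~1 for $C$, for $n\ge N_{C+1}=5N_C$ one has $\lfloor n/5\rfloor\ge N_C$, so $z_M(n)\ge 2\bigl(\lfloor n/5\rfloor/3 + C\bigr)+\lfloor n/5\rfloor\ge (n-4)/3 + 2C\ge n/3+(C+1)$, the last step using $2C-4/3\ge C+1$ for $C\ge 4$. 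Part~2 is identical with $\lceil\,\cdot\,\rceil$ in place of $\lfloor\,\cdot\,\rfloor$ and the estimate $5\lceil n/5\rceil\le n+4$.

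It remains to settle the base case $C=4$, namely $z_M(n)\ge n/3+4$ and $z_m(n)\le n/3-4$ for all $n\ge 132$. I would prove this by a second (strong) induction on $n$: for $n\ge 660$ both $\lfloor n/5\rfloor$ and $\lceil n/5\rceil$ are at least $132$, so the two recursions reduce the claim to smaller values, while for $132\le n<660$ I would verify the two inequalities directly by computer, using the stated fact that every factor of length $r\le 5^t$ occurs in some $\phi^t(x)$ with $|x|=2$ (here $t=5$ suffices since $659<5^5$) and recording only the pair $(z_m(r),z_M(r))$ for each length.

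The step I expect to be the main obstacle is this base case: everything above the threshold is forced by the clean doubling recursion, but the finite range $132\le n<660$ must be handled by explicit computation, and one must check that the unavoidable off-by-a-few error from using $\lfloor n/5\rfloor$ or $\lceil n/5\rceil$ rather than exact multiples of $5$ never erodes the additive constant. This is precisely why the statement is phrased with the generous slack $+C$ starting at $C=4$ and the starting value $N_4=132$ rather than something smaller.
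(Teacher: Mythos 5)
Your proof is correct, but it takes a genuinely different route from the paper's. The paper does not derive its key recursion from first principles: it imports the inequality $z_M(\ell k+r) \geq d\,z_M(k+1) + z_1(k+1) + \Delta - z_M(\ell-r)$ from the \emph{proof} of Proposition~18 of \cite{asymptoticcomplexity}, handles the boundary case $k = 5^{j+1}N_4$ separately using Lemma~13 of that same paper, and organizes the whole argument as a double induction on the parametrized property $P(j,C)$, i.e.\ induction on $j$ within blocks $5^j N_C \leq n \leq 5^{j+1}N_C$ and then on $C$. You replace all of this with the single self-contained identity $z(\phi(w)) = 2z(w) + |w|$ plus monotonicity of the zero-count under extension and truncation, which yields the cleaner recursions $z_M(n) \geq 2z_M(\lfloor n/5\rfloor) + \lfloor n/5\rfloor$ and $z_m(n) \leq 2z_m(\lceil n/5\rceil) + \lceil n/5\rceil$ with no error term $-z_M(5-r)$ and no boundary case; your induction is then on $C$, with a strong induction on $n$ confined to the base case $C=4$. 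The arithmetic in your steps checks out (the slack needed is $2C - \tfrac{4}{3} \geq C+1$, i.e.\ $C \geq \tfrac{7}{3}$, amply satisfied). What the paper's route buys is reuse of machinery invoked elsewhere in the section and a smaller computer check for part~1 (it fixes $N_4 = 29$ there and verifies $29 \leq n \leq 145$, while you must check $132 \leq n \leq 659$; for part~2 the ranges are comparable since the paper also needs $N_4 = 132$). What your route buys is self-containedness — no reliance on an inequality stated only inside another paper's proof — together with a tighter recursion and a simpler induction architecture. Both arguments exploit the same underlying mechanism (deviation from $n/3$ doubles while length quintuples under $\phi$), and both ultimately rest on a finite machine verification, so yours is a legitimate and arguably cleaner proof of the proposition as stated.
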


\begin{proof}

We prove part 1; part 2 is proven similarly with $N_4 = 132$. For clarity, we parametrize the property $$P(j, C): \left[ 5^j \cdot N_C \leq n \leq 5^{j+1} \cdot N_C \Rightarrow z_M(n) \geq \frac{n}{3}+C \right ]$$ 
Clearly, if $P(j,C)$ holds for a given $C$ and for all $j \in \mathbb{N}$ then our proposition holds for that $C$. Thus, we proceed by double-induction on $j$ and $C$. 

We fix $N_4=29$ and verify by computer that $ 29 \leq n \leq 145 \Rightarrow
z_M(n) \geq \frac{n}{3}+4$ and thus $P(0, 4)$ is satisfied for
$N_4=29$. Suppose that $P(j,4)$ holds for some $j\in \mathbb{N}$ and
let $5^{j+1} \cdot N_4 \leq n \leq 5^{j+2} \cdot N_4$. We may write $n
= 5k + r$ for some integers $k,r$ with $0 \leq r \leq 4$. Then $5^{j}
\cdot N_4 \leq k+\frac{r}{5} \leq 5^{j+1} \cdot N_4$ and we have two
cases: either $k < 5^{j+1}\cdot N_4$ or $k = 5^{j+1}\cdot N_4$.

If $k < 5^{j+1} \cdot N_4$ then $5^{j} \cdot N_4 \leq k+1 \leq 5^{j+1}
\cdot N_4$ and by $P(j, 4)$ we have $z_M(k+1) \geq \frac{k+1}{3} +4$.
One of the inequalities (for an $\ell$-uniform morphism) in the proof of
\cite[Proposition~18]{asymptoticcomplexity} is
\[
z_M(\ell k+r) \geq dz_M(k+1) + z_1(k+1) + \Delta - z_M(\ell-r),
\]
which, after substituting the appropriate values for the constants for
$\phi$, becomes
\begin{equation}\label{prop18}
z_M(\ell k+r) \geq 2z_M(k+1) + k+1 - z_M(5-r) \geq 2z_M(k+1) + k - 2,
\end{equation}
since $z_M(1) \leq \cdots \leq z_M(5) = 3$.  Thus, we have
\begin{align*}
    z_M(n) & = z_M(5k+r) \\    
    & \geq 2z_M(k+1)+k-2 &(\text{by \eqref{prop18}})\\
    & \geq 2 \left ( \frac{k+1}{3}+4 \right ) + k-2 &(\text{by } P(j, 4))\\
    & = \frac{1}{3}(5k+4) + \frac{16}{3}\\
    & \geq \frac{1}{3}(5k+r) + 4 \\
    & = \frac{n}{3}+4, &(\text{since } 0\leq r \leq 4)
\end{align*}
as required.

If $k = 5^{j+1}\cdot N_4$ then by \cite[Lemma~13]{asymptoticcomplexity} we get 
\begin{align*}
    z_M(n) & = z_M(5^{j+2}N_4) = d \cdot z_M(5^{j+1}N_4) + 5^{j+1}N_4 + \Delta \\
    & = 2z_M(5^{j+1}N_4) + 5^{j+1}N_4 \\
    & \geq 2\left ( \frac{5^{j+1}N_4}{3} +4 \right ) + 5^{j+1}N_4\\
    & = \frac{5}{3}(5^{j+1}N_4)+8 \\
    & = \frac{n}{3}+8 \\
    & \geq \frac{n}{3}+4,
\end{align*}
as required, and so in either case, $P(j+1, 4)$ holds and by induction
we have $P(j,4)$ for all $j \in \mathbb{N}$.

Suppose that there exist $C\geq 4$ and $N_C$ with $(\forall j \in
\mathbb{N}) [P(j, C)]$. Now if $n \geq 5N_C$ we may write $n = 5k+r$
where $k \geq N_C$ and $0 \leq r \leq 4$. Then we have 
\begin{align*}
    z_M(n) & = z_M(5k+r)\\
    & \geq 2z_M(k+1) +k-2 &(\text{by \eqref{prop18}})\\
    & \geq 2\left(\frac{k+1}{3}+C \right) +k-2 \\
    & = \frac{1}{3} (5k+4) +2C -\frac{8}{3} \\
    & \geq \frac{1}{3} (5k+r) +C+1 \\
    & = \frac{n}{3}+(C+1),
\end{align*}
so $N_{C+1} = 5N_C$  and the result holds by induction. 
\end{proof}

\begin{corollary}\label{pvects}
For each $C \geq 4$ and $N_C = 132\cdot 5^{C-4}$, we have
\[
\left\lbrace \left ( \left\lfloor \frac{n}{3} \right\rfloor + D, n - \left\lfloor
\frac{n}{3} \right\rfloor - D \right) : -C \leq D \leq C \right\rbrace
\subseteq \psi(\mathcal{L}_{n,\Phi})
\]
for all $n\geq N_C$.
\end{corollary}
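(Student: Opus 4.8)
The plan is to reduce the statement to a pair of numerical inequalities by exploiting the fact, recorded just before Proposition~\ref{n_over_3_plus_C}, that the Parikh vectors of length-$n$ factors of $\Phi$ are completely determined by the pair $(z_m(n), z_M(n))$. Concretely, by \cite[Lemma~2.1]{zamboni} the set of zero-counts realized by length-$n$ factors of $\Phi$ is exactly the integer interval $[z_m(n), z_M(n)]$; equivalently, a Parikh vector $(z, n-z)$ belongs to $\psi(\mathcal{L}_{n,\Phi})$ if and only if $z_m(n) \leq z \leq z_M(n)$. Since the target vector $\left(\lfloor n/3 \rfloor + D,\, n - \lfloor n/3 \rfloor - D\right)$ has exactly $z = \lfloor n/3 \rfloor + D$ zeroes, it suffices to show
\[
z_m(n) \leq \left\lfloor \frac{n}{3} \right\rfloor + D \leq z_M(n)
\]
for every integer $D$ with $-C \leq D \leq C$ and every $n \geq N_C$. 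Because the quantity $\lfloor n/3 \rfloor + D$ is increasing in $D$, it is enough to handle the two extreme values $D = C$ (for the upper bound) and $D = -C$ (for the lower bound).

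Both inequalities will follow directly from Proposition~\ref{n_over_3_plus_C}. For the upper bound I would invoke part~1, which gives $z_M(n) \geq n/3 + C$; then, using $\lfloor n/3 \rfloor \leq n/3$ and $D \leq C$, one gets $\lfloor n/3 \rfloor + D \leq \lfloor n/3 \rfloor + C \leq n/3 + C \leq z_M(n)$. For the lower bound I would invoke part~2, which gives $z_m(n) \leq n/3 - C$, and then argue by integrality: since $z_m(n)$ and $C$ are integers, the bound $z_m(n) \leq n/3 - C$ forces $z_m(n) \leq \lfloor n/3 - C \rfloor = \lfloor n/3 \rfloor - C$, whence $z_m(n) \leq \lfloor n/3 \rfloor - C \leq \lfloor n/3 \rfloor + D$ because $D \geq -C$. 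Chaining the two bounds places $\lfloor n/3 \rfloor + D$ inside $[z_m(n), z_M(n)]$ for every admissible $D$, which by the interval description completes the containment.

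The argument is short once the interval description is in hand, so I do not anticipate a serious obstacle; the only point demanding care is the lower-bound step, where the floor appearing in the target cannot simply be replaced by $n/3$. The integrality of $z_m(n)$ is precisely what upgrades the real inequality $z_m(n) \leq n/3 - C$ of Proposition~\ref{n_over_3_plus_C}(2) to the integer inequality $z_m(n) \leq \lfloor n/3 \rfloor - C$ needed to cover the extreme case $D = -C$; without this observation the endpoints of the claimed range of $D$ would not be fully justified. Everything else is a direct consequence of the two bounds already established.
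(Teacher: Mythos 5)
Your proof is correct and is exactly the derivation the paper intends: the Corollary is stated without an explicit proof, as an immediate consequence of Proposition~\ref{n_over_3_plus_C} combined with the interval property of binary Parikh vectors from \cite[Lemma~2.1]{zamboni} cited just before it. Your write-up simply makes the implicit steps explicit, and the integrality observation you flag for the lower bound (upgrading $z_m(n) \leq n/3 - C$ to $z_m(n) \leq \lfloor n/3 \rfloor - C$) is indeed the one point that needs care.
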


We will use Corollary~\ref{pvects} to show that, given $a$ and $b$,
every sufficiently large integer has a representation $ax+by$ where
$(x,y) \in \psi(\mathcal{L}_\Phi)$. Theorem~\ref{mainmorphic} therefore
follows from the next lemma.

\begin{lemma}\label{morphicdriver}
Let $C = \left\lceil \max \left \{1+\frac{a+2b}{3}, b, \frac{b-a}{3},
4 \right \} \right\rceil $. Then every integer $$M \geq M_{a,b} := \max \left \{(a+2b)\cdot \max \{a,b\}, \frac{a+2b}{3}(132\cdot 5^{C-4} + |a-b|) \right \}$$ has a representation $M=a(x-tb)+b(y+ta)$ where $(x-tb, y+ta) \in \psi(\mathcal{L}_\Phi)$ for some $t \in \mathbb{Z}$.
\end{lemma}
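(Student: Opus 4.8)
The plan is to start from an arbitrary target integer $M \geq M_{a,b}$ and produce a single Parikh vector in $\psi(\mathcal{L}_\Phi)$ that represents it. First I would use the classical Frobenius/Sylvester result: since $\gcd(a,b)=1$, every sufficiently large integer (in particular every $M \geq (a+2b)\max\{a,b\}$, which exceeds $ab-a-b$) admits \emph{some} representation $M = ax_0 + by_0$ with $x_0, y_0 \geq 0$. Among all representations, the coefficient pairs differ by adding integer multiples of $(b,-a)$; that is, any representation has the form $(x_0 + tb,\, y_0 - ta)$ for $t \in \mathbb{Z}$. So the task reduces to choosing $t$ so that the resulting pair lies in $\psi(\mathcal{L}_\Phi)$, and Corollary~\ref{pvects} tells me exactly which pairs are guaranteed to be present: those of the form $(\lfloor n/3 \rfloor + D,\, n - \lfloor n/3 \rfloor - D)$ with $|D| \leq C$ and $n \geq N_C$.

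The heart of the argument is therefore to show I can slide $t$ to land in that admissible window. Writing $n = (x_0 + tb) + (y_0 - ta) = M'_0 + t(b-a)$ for the candidate length (where $M'_0 = x_0 + y_0$), I would argue that as $t$ ranges over $\mathbb{Z}$ the lengths $n$ step by $(b-a)$, and I want to hit an $n \geq N_C$ for which the first coordinate $x_0 + tb$ is within $C$ of $\lfloor n/3 \rfloor$. The natural target is to make the first coordinate roughly $n/3$: since the total $n = ax/\text{(weighted)}\dots$ — more precisely, I would solve approximately for the $t$ that balances the two coordinates near the $(1:2)$ ratio (first coordinate near $n/3$, second near $2n/3$). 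The definition of $C$ as $\lceil\max\{1 + (a+2b)/3,\, b,\, (b-a)/3,\, 4\}\rceil$ is engineered precisely so that this balancing succeeds: the term $b$ ensures one full step of $t$ moves the first coordinate by less than $C$ relative to the drift of $\lfloor n/3\rfloor$, so the sequence of ``distances from $n/3$'' cannot jump over the interval $[-C,C]$, while the $(a+2b)/3$ term controls the starting offset.

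The step I expect to be the main obstacle is making this ``cannot jump over the window'' argument rigorous and simultaneously guaranteeing $n \geq N_C$. The issue is a coupling: I need both $|{\,(x_0+tb) - \lfloor n/3\rfloor}| \leq C$ \emph{and} $n = M'_0 + t(b-a) \geq N_C = 132\cdot 5^{C-4}$ for the \emph{same} $t$. I would handle this by first locating the real value $t^*$ that makes the first coordinate exactly $n/3$, checking that rounding $t^*$ to the nearest integer keeps the deviation from $\lfloor n/3\rfloor$ bounded by $C$ (this is where the $\max\{b, (b-a)/3\}$ contributions to $C$ absorb the rounding error and the $\lfloor\cdot\rfloor$ discretization), and then verifying that the lower bound $M \geq \frac{a+2b}{3}(N_C + |a-b|)$ forces the corresponding $n$ to exceed $N_C$. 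The second bound in $M_{a,b}$ is exactly what converts a lower bound on $M$ into the needed lower bound on the length $n$, since $M \approx a\cdot\frac{n}{3} + b\cdot\frac{2n}{3} = \frac{(a+2b)n}{3}$ along the balanced ray. Once both conditions hold for that $t$, Corollary~\ref{pvects} delivers the Parikh vector and the representation $M = a(x-tb) + b(y+ta)$ with $(x-tb, y+ta) \in \psi(\mathcal{L}_\Phi)$ follows, completing the proof.
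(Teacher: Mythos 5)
Your proposal is correct and follows essentially the same route as the paper's proof: obtain an initial nonnegative representation via Sylvester's bound, slide along the solution line $(x-tb,\,y+ta)$, solve for the real $t_0$ that puts the first coordinate at exactly one third of the candidate length, round it, absorb the rounding and floor errors into the definition of $C$, and use the second term of $M_{a,b}$ to force the length above $N_C$ so that Corollary~\ref{pvects} applies. The only cosmetic differences are your sign convention for $t$ and your intermediate-value phrasing of the window argument, which the paper instead carries out as an explicit two-case computation (floor when $a>b$, ceiling when $a<b$).
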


\begin{proof}
Suppose that $(a,b)=1$ is given and let $M=ax+by$ for some
non-negative integers $x,y$ (note that $M$ is larger than the quantity
from \eqref{frob_num}, so such a representation exists). For each $t
\in \mathbb{Z}$ we have $M = a(x-tb)+b(y+ta)$.  Our aim is to
show that there is a choice of $t$ for which $(x-tb, y+ta) \in \psi(\mathcal{L}_\Phi)$. Note that, from Corollary~\ref{pvects}, if we look at large enough factors of $\Phi$ we eventually obtain a factor that is roughly one third 0's. Thus, if we define $n(t) = x+y +t(a-b)$, then we seek a $t_0$ such that $x-t_0b =\frac{1}{3}n(t_0)$ and thus let $t_0 = \frac{2x-y}{2b+a}$. However, $t_0$ is not necessarily an integer, so we will use either the floor or ceiling $\lfloor t_0 \rceil$ and show the existence of a subword with length $n(\lfloor t_0 \rceil )$ and $x- \lfloor t_0 \rceil b$ zeroes. 

We first claim that $x-\lfloor t_0 \rceil b $ and $y+\lfloor t_0
\rceil a$ are nonnegative (and thus it is possible to speak of a
factor with length $n(\lfloor t_0 \rceil )$ and $x-\lfloor t_0 \rceil
b$ zeroes). We have $$x -  t_0 b = \frac{1}{3} n(t_0) = \frac{1}{2}
(y+ t_0 a)$$ and so $x-t_0 a$, $n(t_0)$, and $y+t_0 a$ each have the
same sign. As well, $$x-t_0b = \frac{ax+by}{2b+a} = \frac{M}{2b+a}
\geq 0$$ so the three integers are nonnegative. Now note that
replacing $t_0$ with $\lfloor t_0 \rceil$ only changes each expression
by a small amount: $\left | x-t_0b - (x-\lfloor t_0 \rceil) b \right |
< b$ and $\left | y+t_0a - (y+\lfloor t_0 \rceil) a \right | <
a$. Thus if $M > (2b+a) \cdot \max \{a, b\}$ then we have $$x-\lfloor
t_0 \rceil b > (x-t_0 b)-b >\frac{M}{2b+a} -b > \left
(\frac{2b+a}{2b+a} \right ) \max \{a,b\} -b \geq 0 $$ and $$y+\lfloor
t_0 \rceil a > y+t_0 a -a = 2(x-t_0 b) - a = 2 \left ( \frac{M}{2b+a}
\right ) -a >2 \left(\frac{2b+a}{2b+a}\right)  \max \{ a,b \} -a =
2\cdot \max \{ a,b\} -a > 0$$ and thus both $x-\lfloor t_0 \rceil b $ and $y+\lfloor t_0 \rceil a$ are nonnegative as required.

We now show that the corresponding factor exists within $\Phi$. We
have two cases:

Case~1: $a > b$.  Then we have
\begin{align*}
    \left \lfloor \frac{1}{3} n(\lfloor t_0 \rfloor) \right \rfloor & \leq \frac{1}{3} \left( x +y + \lfloor t_0 \rfloor (a-b) \right ) \\ 
    & \leq \frac{1}{3} \left( x +y +  t_0 (a-b) \right ) \\
    & = x- t_0 b \mbox{ by our choice of } t_0 \\
    & \leq x - \lfloor t_0 \rfloor b 
\end{align*}
and 
\begin{align*}
\left \lfloor \frac{1}{3} n(\lfloor t_0 \rfloor) \right \rfloor & \geq \frac{1}{3} \left( x +y + \lfloor t_0 \rfloor (a-b) \right ) -1 \\
& \geq \frac{1}{3} \left( x +y + ( t_0 -1) (a-b) \right ) -1 \\
&= x - t_0 b  - \frac{1}{3}(a-b) -1 \mbox{ by our choice of } t_0\\
& \geq x - (\lfloor t_0 \rfloor +1) b - \frac{1}{3}(a-b)-1  \\
& = x - \lfloor t_0 \rfloor b - \left (\frac{a+2b}{3}+1\right ) \\
\end{align*}
so 
\begin{equation} \left \lfloor \frac{1}{3} n(\lfloor t_0 \rfloor) \right \rfloor \leq x - \lfloor t_0 \rfloor b \leq \left \lfloor \frac{1}{3} n(\lfloor t_0 \rfloor) \right \rfloor + \left (\frac{a+2b}{3}+1\right ).\label{case1} \end{equation}

Case~2: $a<b$. Then we have 
\begin{align*}
    \left \lfloor \frac{1}{3} n(\lceil t_0 \rceil ) \right \rfloor & \leq \frac{1}{3}(x+y+\lceil t_0 \rceil (a-b)) \leq  \frac{1}{3}(x+y+ t_0 (a-b)) \\
    & = x- t_0 b \hbox{ by our choice of } t_0 \\ 
    & \leq x - (\lceil t_0 \rceil -1)b \\
    & = x - \lceil t_0 \rceil b - b
\end{align*}
and
\begin{align*}
    \left \lfloor \frac{1}{3} n(\lceil t_0 \rceil ) \right \rfloor & \geq \frac{1}{3} (x+y+ \lceil t_0 \rceil (a-b)) - 1 \\
    & \geq \frac{1}{3} (x+y+  (t_0 +1) (a-b)) - 1 \\
    & = x - t_0 b + \frac{1}{3}(a-b) \mbox{ by our choice of } t_0 \\
    & \geq x - \lceil t_0 \rceil b + \frac{1}{3}(a-b) \\
\end{align*}
so 
\begin{equation}
\left \lfloor \frac{1}{3} n(\lceil t_0 \rceil ) \right \rfloor - b \leq x - \lceil t_0 \rceil b \leq \left \lfloor \frac{1}{3} n(\lceil t_0 \rceil ) \right \rfloor + \frac{1}{3}(b-a).      \label{case2} 
\end{equation} 

In either case, we may take $C = \left\lceil \max \left \{1+\frac{a+2b}{3},
b, \frac{b-a}{3}, 4 \right \} \right\rceil$ and since $$n( \lfloor t_0
\rceil ) \geq  n(t_0) - |a-b| = \frac{3M}{a+2b} - |a-b| \geq 132\cdot 5^{C-4} = N_C,$$
by Corollary~\ref{pvects} we have that there exists a subword $w$ of
$\Phi$ such that $ |w| = n( \lfloor t_0 \rceil )$ and $\psi(w) = (x- \lfloor t_0 \rceil b, y+ \lfloor t_0 \rceil b)$. 

\end{proof}

As noted, Theorem~\ref{mainmorphic} follows directly from
Lemma~\ref{morphicdriver}. However, the bound on $M$ described in
Lemma~\ref{morphicdriver} is certainly not optimal; the maximum
non-representable integer may be much smaller than $M_{a,b}$. We
therefore now compute exactly the largest value of $\mathbb{N}
\setminus S(\mathcal{L}_\Phi)$ for several small values of $a,b$.

We compute the complement of $S(\mathcal{L}_\Phi)$ based on the Parikh
vectors of factors of length up to \[r_{a,b} =
\frac{M_{a,b}}{\min\{a,b \}} \] and thus for any integer $M <
M_{a,b}$, if it is representable then its representation should appear
among the Parikh vectors of factors up to length $r_{a,b}$. For
convenience, we collected the Parikh vectors of factors up to length
$r_0 = \max \{ r_{a,b} : 1 \leq a, b \leq 6\} = 16500 < 5^7$ and then
computed $S(\mathcal{L}_\Phi)$ and its complement only using the
Parikh vectors of factors of the appropriate lengths.  The results are
reported in Table~\ref{tab:complements_phi}.

\begin{center}
\begin{tabular}{|c|c|c|}
\hline
$(a,b)$                       & $\lceil M_{a,b} \rceil$ & $\mathbb{N}
\setminus S(\mathcal{L}_\Phi)$                                                       \\ \hline
(1,1)                       & 132              & \{\}                                                                    \\ \hline
(1,2)                       & 222              & \{\}                                                                     \\ \hline
(1,3)                       & 313              & \{\}                                                                        \\ \hline
(1,4)                       & 405              & \{3\}                                                                   \\ \hline
(1,5)                       & 2435             & \{3,4,9\}                                                               \\ \hline
(1,6)                       & 14322            & \{3,4,5,10,11\}                                                         \\ \hline
(2,1)                       & 178              & \{\}                                                                         \\ \hline
(2,3)                       & 355              & \{1\}                                                                   \\ \hline
(2,5)                       & 2652             & \{1,3,6,8,13\}                                                          \\ \hline
(3,1)                       & 244              & \{\}                                                                         \\ \hline
(3,2)                       & 311              & \{1\}                                                                   \\ \hline
(3,4)                       & 2424             & \{1,2,5,9\}                                                             \\ \hline
(3,5)                       & 14309            & \{1,2,4,7,9,12,17\}                                                     \\ \hline
(4,1)                       & 270              & \{\}                                                                         \\ \hline
(4,3)                       & 2204             & \{1,2,5\}                                                               \\ \hline
(4,5)                       & 15405            & \{1, 2, 3, 6, 7, 11, 12, 16, 21, 25\}                                   \\ \hline
(5,1)                       & 318              & \{\}                                                                         \\ \hline
(5,2)                       & 405              & \{1,3\}                                                                 \\ \hline
(5,3)                       & 2428             & \{1,2,4,7,15\}                                                            \\ \hline
(5,4)                       & 14305            & \{1, 2, 3, 6, 7, 11, 15, 20, 24\}                                       \\ \hline
(5,6)                       & 93506            & \{1, 2, 3, 4, 7, 8, 9, 13, 14, 15, 19, 20, 25, 26, 30, 31, 36, 42, 59\} \\ \hline
(6,1)                       & 366              & \{5\}                                                                   \\ \hline
(6,5)                       & 88006            & \{1, 2, 3, 4, 7, 8, 9, 13, 14, 18, 19, 24, 25, 29, 30, 35\}             \\ \hline
\end{tabular}
\captionof{table}{$\mathbb{N}\setminus S(\mathcal{L}_\Phi)$ for small values of
  $a,b$}\label{tab:complements_phi}
\end{center}

\section{A ternary word with constant abelian complexity}

Dekking \cite{dekking} proved that Sturmian words do not have the
Frobenius property.  If ${\bf s}$ is a Stumian word, then ${\bf s}$ is
\emph{balanced}: i.e., for all letters $a \in \{0,1\}$, we have
$||u|_a - |v|_a| \leq 1$ whenever $u$ and $v$ are factors of ${\bf s}$
of the same length.  Furthermore, as noted in the introduction, we
have $\rho_{\bf s}(n) = 2$ for all $n \geq 1$, and indeed, the
aperiodic words with this abelian complexity function are exactly the
Sturmian words.  Dekking also performed a detailed analysis of
$S(\mathcal{L}_{\bf f})$ for the Fibonacci word ${\bf f}$ defined as
follows.

\begin{definition}[Fibonacci Word] Let $\phi = \frac{1}{2}({1+\sqrt5})
  = 1.618 \cdots$ and let $\alpha = 2- \phi = 0.38196 \cdots $. We
  define $${\bf f} = ( \lfloor(n+1)\alpha \rfloor - \lfloor n\alpha
  \rfloor )_{n \geq 1} = 01001010010010100\cdots $$
  We also note that
  $$( \lfloor(n+1)\phi \rfloor - \lfloor n\phi
  \rfloor )_{n \geq 1} = 21221212212212122\cdots $$
  is the sequence obtained from ${\bf f}$ by applying the map $0 \to 2$.
\end{definition}

Dekking showed that $S(\mathcal{L}_{\bf f})$ is co-finite except when
$(S(0),S(1)) \in \{(1,1),(1,2),(1,3),(2,1)\}$.  If one wished to
extend Dekking's analysis to ternary words, then in this setting, the
natural ternary analogue of Sturmian words are aperiodic ternary words
${\bf x}$ with abelian complexity $\rho_{\bf x}(n) = 3$ for $n \geq
1$.  Currently there is no complete characterization of such words;
however, Richomme, Saari, and Zamboni \cite{zamboni} proved that if
${\bf x}$ is aperiodic, ternary, and balanced, then $\rho_{\bf x}(n) =
3$ for $n \geq 1$.

Hubert \cite{hubert} gave a useful characterization of aperiodic
balanced words.  The reader may consult Hubert's paper for more
details.  Here, we will use his characterization to construct a word
${\bf t}$ from the Fibonacci word ${\bf f}$ with abelian complexity
$3$ for all lengths.  For ease of notation, let $T$ be the operation
that sends $ 1 \to 1$ and every \emph{second} $0 \to 2$, starting with
the \emph{second} $0$. Similarly, let $\overline{T}$ be the operation
that sends $1 \to 1$ and every \emph{second} $0\to {2}$, starting with
the \emph{first} $0$.

\begin{example}
  Let $\chi = 01010101\cdots$.  Then 
$T(\chi) = 01210121\cdots$ and $\overline{T}(\chi) = 21012101\cdots$.
\end{example}

We define $${\bf t} = T({\bf f}) = 01201210210210120\cdots$$ and we immediately have
the following.

\begin{lemma} \label{complexity3}
$\rho^{ab}_{\bf t}(n) = 3$ for all $n \geq 1$.
\end{lemma}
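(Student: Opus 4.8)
The plan is to verify that ${\bf t}$ satisfies the hypotheses of the Richomme--Saari--Zamboni theorem quoted above---that ${\bf t}$ is aperiodic, genuinely ternary, and balanced---and then to invoke that theorem to conclude $\rho^{ab}_{\bf t}(n) = 3$ for all $n \geq 1$. Two of the three properties are easy. The word ${\bf t}$ is ternary because ${\bf f}$ contains infinitely many $0$'s, so $T$ produces infinitely many $0$'s and infinitely many $2$'s, while the $1$'s of ${\bf f}$ are preserved; hence all three letters occur. Aperiodicity follows by projection: applying the letter map $2 \mapsto 0$ (fixing $0$ and $1$) to ${\bf t}$ recovers ${\bf f}$, so a period of ${\bf t}$ would force a period of ${\bf f}$, contradicting the aperiodicity of the Fibonacci word.

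The crux is balance, and this is where Hubert's characterization enters. By construction, ${\bf t}$ is obtained from the Sturmian word ${\bf f}$ by replacing the subsequence of $1$'s with the (trivially) constant-gap word over the alphabet $\{1\}$ and the subsequence of $0$'s with the constant-gap word $(02)^\omega$ over the disjoint alphabet $\{0,2\}$; here $0$ sits at the odd-indexed $0$'s of ${\bf f}$ and $2$ at the even-indexed ones, each occurring with gap $2$. This is exactly the substitution operation appearing in Hubert's characterization of aperiodic balanced words, so Hubert's theorem gives directly that ${\bf t}$ is balanced (and aperiodic). I would cite Hubert for this step.

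To make the argument self-contained I would also indicate the short direct verification of balance. Fix two factors $u, v$ of ${\bf t}$ of the same length $n$. Their $1$-counts equal the $1$-counts of the corresponding length-$n$ factors of ${\bf f}$, so $||u|_1 - |v|_1| \leq 1$ by balance of ${\bf f}$. For the letters $0$ and $2$, observe that a length-$n$ window of ${\bf t}$ covers some block of consecutive $0$'s of ${\bf f}$, say the $p$-th through $q$-th, and that $T$ colors these $0,2,0,2,\ldots$ according to the parity of their index; thus if the window contains $c$ such positions, its number of $0$'s is $\lfloor c/2 \rfloor$ or $\lceil c/2 \rceil$. Since ${\bf f}$ is balanced in its $0$'s, the two values of $c$ arising from $u$ and $v$ differ by at most $1$, and a short case check on the parity of $c$ then yields $||u|_0 - |v|_0| \leq 1$ and hence $||u|_2 - |v|_2| \leq 1$.

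The main obstacle is simply confirming that the construction of ${\bf t}$ matches Hubert's hypotheses verbatim: one must check that $(02)^\omega$ is a genuine constant-gap word and that the alphabets $\{1\}$ and $\{0,2\}$ used for the two substituted subsequences are disjoint. Once balance is in hand, the conclusion $\rho^{ab}_{\bf t}(n) = 3$ is immediate from the Richomme--Saari--Zamboni result, which is why the lemma is stated as following ``immediately.''
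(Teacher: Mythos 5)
Your proposal is correct and takes essentially the same route as the paper: it invokes Hubert's characterization (viewing ${\bf t}$ as the Sturmian word ${\bf f}$ with its $0$'s replaced by the constant-gap word $(02)^\omega$ and its $1$'s left fixed) to conclude that ${\bf t}$ is aperiodic and balanced, and then applies the Richomme--Saari--Zamboni theorem to get $\rho^{ab}_{\bf t}(n)=3$. The only detail worth adding is that the paper also records \emph{uniform recurrence} of ${\bf t}$ (which Hubert's theorem supplies as well) when applying Theorem~4.2 of Richomme--Saari--Zamboni, since that result is stated in the setting of minimal subshifts; your optional hands-on verification of balance is a correct but unnecessary supplement.
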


\begin{proof}
By \cite{hubert} (and its English explanation in
\cite[Section~4]{vuillon}), the word ${\bf t}$ is an aperiodic, uniformly
recurrent, balanced word on $\{ 0, 1, 2\}$, so the result follows
from \cite[Theorem~4.2]{zamboni}.
\end{proof}

We will also make use of the following property.

\begin{definition}[WELLDOC Property \cite{welldoc}]
We say that an infinite aperiodic word $\lambda$ on $A = \{ 0,1,
\ldots, d-1\}$ has \emph{well distributed occurrences (WELLDOC)} if
for every $m \in \mathbb{N}$ and every subword $w$ of $\lambda$ we have
$$\{ (|u|_0, |u|_1, \ldots, |u|_{d-1})\bmod m : \lambda = uwv \} = \mathbb{Z}^d_m.$$
\end{definition}

Sturmian words have the WELLDOC property \cite[Theorem~3.3]{welldoc}.

\begin{definition}
For a subset $A \subseteq \mathbb{R}$ and a constant $c \in
\mathbb{R}$ we define $c+A := \{c+a : a\in A\}$.
\end{definition}

\begin{lemma}\label{L}
$\mathcal{L}_{\bf t} = T(\mathcal{L}_{\bf f}) \cup \overline{T}(\mathcal{L}_{\bf f})$.
\end{lemma}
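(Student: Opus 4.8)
The plan is to track, for each window in ${\bf f}$, the parity of the number of $0$'s that precede it, since this parity is exactly what distinguishes $T$ from $\overline{T}$. Recall that under $T$ applied to all of ${\bf f}$, the $k$-th $0$ of ${\bf f}$ is replaced by $2$ precisely when $k$ is even. Fix a factor $u = {\bf f}[i..j]$ and let $p$ denote the number of $0$'s in the prefix ${\bf f}[1..i-1]$. Then the $r$-th $0$ inside $u$ is the $(p+r)$-th $0$ of ${\bf f}$, so it is recolored to $2$ exactly when $p+r$ is even. The first step is to record the resulting dichotomy: if $p$ is even, then the $0$'s of $u$ are recolored to $2$ at relative positions $2,4,6,\ldots$, which is the definition of $T(u)$; if $p$ is odd, they are recolored at relative positions $1,3,5,\ldots$, which is the definition of $\overline{T}(u)$. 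In other words, the restriction of $T$ to any window of ${\bf f}$ produces either $T(u)$ or $\overline{T}(u)$ according to the parity of $p$. Here it is important that $T$ and $\overline{T}$ are length-preserving letter codings, so that ${\bf t}[i..j]$ is literally the recoloring of the window ${\bf f}[i..j]$.

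With this correspondence in hand, the inclusion $\mathcal{L}_{\bf t} \subseteq T(\mathcal{L}_{\bf f}) \cup \overline{T}(\mathcal{L}_{\bf f})$ is immediate: any factor of ${\bf t} = T({\bf f})$ is the image under $T$ of some window ${\bf f}[i..j]$, and by the dichotomy it equals $T(u)$ or $\overline{T}(u)$ with $u = {\bf f}[i..j] \in \mathcal{L}_{\bf f}$.

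For the reverse inclusion I would show that every factor $u \in \mathcal{L}_{\bf f}$ has an occurrence in ${\bf f}$ preceded by an even number of $0$'s and, separately, an occurrence preceded by an odd number of $0$'s; feeding these two occurrences into the dichotomy above yields $T(u), \overline{T}(u) \in \mathcal{L}_{\bf t}$, as desired. This is where I invoke that ${\bf f}$, being Sturmian, has the WELLDOC property. Applying WELLDOC with $d=2$ and $m=2$ to the subword $u$ gives $\{(|w|_0, |w|_1) \bmod 2 : {\bf f} = wuv\} = \mathbb{Z}^2_2$; projecting onto the first coordinate shows that $|w|_0$, the number of $0$'s preceding the chosen occurrence of $u$, realizes both residues modulo $2$. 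This supplies the two occurrences required.

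I expect the main obstacle to be the careful formalization of the first step, namely the parity bookkeeping that identifies the restriction of the global map $T$ to a window with either $T$ or $\overline{T}$ applied to the isolated factor. One must be precise that $T(u)$ and $\overline{T}(u)$ depend only on the relative order of the $0$'s within $u$ together with the starting parity, so that the recoloring inside a window is unaffected by where the window is cut, and that the boundary $0$'s are counted consistently. Once this is pinned down, both inclusions follow mechanically, with the WELLDOC property doing the only nontrivial work in the $\supseteq$ direction.
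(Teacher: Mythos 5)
Your proposal is correct and follows essentially the same route as the paper: the forward inclusion comes from observing that a window of ${\bf t} = T({\bf f})$ is a recoloring of the corresponding window of ${\bf f}$ into either $T(u)$ or $\overline{T}(u)$ according to the parity of the preceding $0$'s, and the reverse inclusion invokes the WELLDOC property of Sturmian words with $m=2$ to produce occurrences of each factor with both parities. Your write-up is in fact more explicit than the paper's about the parity bookkeeping, which the paper leaves implicit, but the underlying argument is identical.
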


\begin{proof}
Certainly $ \mathcal{L}_{\bf t} \subseteq T(\mathcal{L}_{\bf f}) \cup
\overline{T}(\mathcal{L}_{\bf f})$, since any factor of ${\bf t}$ is
obtained by taking  a factor of ${\bf f}$ and and replacing every other $0$ with a $2$. Let $t_0 \in T(\mathcal{L}_{\bf f}) \cup \overline{T}(\mathcal{L}_{\bf f})$. Without loss of generality, say $t_0 = T(w)$ for some $w \in \mathcal{L}_{\bf f}$. Then by the WELLDOC property (with $m = 2$), there is an occurrence of $w$ in ${\bf f}$ where it is preceded by an even number of 0's and an occurrence where it is preceded by an odd number of 0's. Then $T(w)$ and $\overline{T}(w)$ both occur as subwords of ${\bf }t$.
\end{proof}

It is well-known that $0{\bf f}[1,n] \in \mathcal{L}_{\bf f}$ and $1{\bf f}[1,n] \in
\mathcal{L}_{\bf f}$. Thus we have $T(0{\bf f}[1,n])$, $T(1{\bf f}[1,n])$,
$\overline{T}(0{\bf f}[1,n])$, and $\overline{T}(1{\bf f}[1,n])$ in
$\mathcal{L}_t$. We will refer to these as the \emph{generating
  prefixes} later on. Since we only have 3 possible Parikh vectors for
each $n$, exactly two of these must be equal. This equality depends on
the parity of $|{\bf f}[1,n]|_0$.

\begin{theorem}\label{vector}
  For $n \geq 1$ define $h(n) = \lfloor (n+1)\alpha
  \rfloor - \lfloor \alpha \rfloor$.  If $|{\bf f}[1,n]|_0$ is odd then
\begin{gather*}
\psi(T(0{\bf f}[1,n])) = \psi(\overline{T}(0{\bf f}[1,n])) =
\left(
\frac{n- h(n)}{2} + \frac12, h(n), \frac{n- h(n)}{2} + \frac12
\right) \\
\psi(T(1{\bf f}[1,n])) =
\left(
\frac{n- h(n)}{2} + \frac12, h(n) +1, \frac{n- h(n)}{2} - \frac12
\right) \\
\psi(\overline{T}(1{\bf f}[1,n]))=
\left(
\frac{n- h(n)}{2} - \frac12, h(n)+1 , \frac{n- h(n)}{2} + \frac12
\right).
\end{gather*}
If $|{\bf f}[1,n]|_0$ is even then
\begin{gather*}
\psi(T(0{\bf f}[1,n])) =
\left(
\frac{n- h(n)}{2} +1, h(n), \frac{n- h(n)}{2}
\right) \\
\psi(\overline{T}(0{\bf f}[1,n])) =
\left(
\frac{n- h(n)}{2}, h(n), \frac{n- h(n)}{2} +1
\right) \\
\psi(T(1{\bf f}[1,n])) = \psi(\overline{T}(1{\bf f}[1,n])) =
\left(
\frac{n- h(n)}{2}, h(n)+1, \frac{n- h(n)}{2}
\right).
\end{gather*}
\end{theorem}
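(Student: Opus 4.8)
The plan is to reduce the entire statement to counting letters and tracking one parity. First I would identify what $h(n)$ actually measures. Since the $k$-th letter of ${\bf f}$ is $\lfloor (k+1)\alpha\rfloor - \lfloor k\alpha\rfloor \in \{0,1\}$, the number of ones in the prefix is a telescoping sum: $|{\bf f}[1,n]|_1 = \sum_{k=1}^{n}\bigl(\lfloor (k+1)\alpha\rfloor - \lfloor k\alpha\rfloor\bigr) = \lfloor (n+1)\alpha\rfloor - \lfloor \alpha\rfloor = h(n)$, and hence $|{\bf f}[1,n]|_0 = n - h(n)$. Consequently the generating prefix $0{\bf f}[1,n]$ contains $n-h(n)+1$ zeros and $h(n)$ ones, while $1{\bf f}[1,n]$ contains $n-h(n)$ zeros and $h(n)+1$ ones.

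Next I would record precisely how $T$ and $\overline{T}$ act on a Parikh vector. Both operations fix every $1$ and turn alternate $0$'s into $2$'s, so they leave the one-count untouched and merely split the zeros between the letters $0$ and $2$. If a binary word $w$ has exactly $z$ zeros, then $T$ keeps the odd-indexed zeros and converts the even-indexed ones, so $T(w)$ has $\lceil z/2\rceil$ zeros and $\lfloor z/2\rfloor$ twos; symmetrically $\overline{T}(w)$ has $\lfloor z/2\rfloor$ zeros and $\lceil z/2\rceil$ twos. In particular $T(w)$ and $\overline{T}(w)$ carry the same Parikh vector exactly when $z$ is even.

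With these two ingredients the theorem becomes a short case analysis on the parity of $n-h(n)=|{\bf f}[1,n]|_0$. For $0{\bf f}[1,n]$ the relevant zero-count is $z=n-h(n)+1$ and for $1{\bf f}[1,n]$ it is $z=n-h(n)$, so in each parity case exactly one of these two prefixes has an even zero-count; this is precisely what forces two of the four generating prefixes to coincide (when $n-h(n)$ is odd it is the $0$-prefix on which $T$ and $\overline{T}$ agree, and when $n-h(n)$ is even it is the $1$-prefix), matching the two equalities asserted in the statement. Substituting $\lceil z/2\rceil$ and $\lfloor z/2\rfloor$ and rewriting them as $\frac{n-h(n)}{2}\pm\frac12$ in the odd case and as $\frac{n-h(n)}{2}$ or $\frac{n-h(n)}{2}+1$ in the even case then produces all eight displayed vectors directly. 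I do not expect any genuine obstacle: the only thing demanding care is the bookkeeping of ceilings versus floors in each parity case, and checking that the half-integer entries and the Parikh-vector coincidences land exactly on the pairs named in the statement.
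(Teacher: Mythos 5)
Your proposal is correct and takes essentially the same route as the paper's proof: the telescoping identity $|{\bf f}[1,n]|_1 = h(n)$, followed by a parity case analysis of the zero counts, with the floor/ceiling bookkeeping for how $T$ and $\overline{T}$ split the zeros between the letters $0$ and $2$. The only cosmetic difference is that you obtain the ``swapped'' vector $\psi(\overline{T}(1{\bf f}[1,n]))$ directly from the definition of $\overline{T}$, whereas the paper appeals to Lemma~\ref{L} for that component swap; both justifications are sound.
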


\begin{proof}
First note that $$|{\bf f}[1,n]|_1 = \sum_{i=1}^n (\lfloor
  (i+1)\alpha \rfloor - \lfloor i\alpha \rfloor) = \lfloor (n+1)\alpha
  \rfloor - \lfloor \alpha \rfloor= h(n).$$
If $|{\bf f}[1,n]|_0$ is odd, it is clear that $$\psi(T(0{\bf f}[1,n])) = \left(\frac{n-h(n)+1}{2}, h(n), \frac{n-h(n)+1}{2}\right)=\psi(\overline{T}(0{\bf f}[1,n]))$$ since exactly half of the 0's in $0{\bf f}[1,n]$ will become 2's after we apply $T$. For $1{\bf f}[1,n]$, we have $$\psi(1{\bf f}[1,n]) = \left(\frac{n-h(n)-1}{2} +1, h(n), \frac{n-h(n)-1}{2}\right).$$ By Lemma~\ref{L}, we get the third Parikh vector by swapping the first and last components.

If $|{\bf f}[1,n]|_0$ is even, we apply a similar line of reasoning to $\psi(T(1{\bf f}[1,n])) = \psi(\overline{T}(1{\bf f}[1,n]))$, $\psi(T(0{\bf f}[1,n]))$, and $\psi(\overline{T}(0{\bf f}[1,n]))$, which gives the above.
\end{proof}

Let $S:\mathcal{L}_{\bf t} \to \mathbb{N}$ be a morphism with
${S(0)=S_0},\  {S(1)=S_1}$, and ${S(2)=S_2}$. As always, we assume
that $\gcd(S_0,S_1,S_2)=1$.  Define
\begin{equation}\label{main_terms}
m(n) = \frac{1}{2} \lfloor n \phi \rfloor (S_0 - 2S_1 + S_2)
-\frac{1}{2} n(S_0 - 4S_1 + S_2).
\end{equation}
(Note that $2m(n)$ is a \emph{generalized Beatty sequence}, in the
sense of Allouche and Dekking \cite{AD19}.)
Using the fact that $\lfloor - x \rfloor = - \lfloor x \rfloor -1$ for
$x \notin \mathbb{Z}$, we see that $\lfloor n\alpha \rfloor = 2n -
\lfloor n\phi \rfloor - 1$.  Using this identity and the fact that
$S(w) = S_0 |w|_0 + S_1|w|_1 + S_2|w|_2$, we obtain (after some
algebra) the following corollary of Theorem~\ref{vector}.

\begin{corollary}\label{cor:applying_S}
    If $|{\bf f}[1, n-1]|_0$ is odd then
    \begin{gather*}
       S(T(0{\bf f}[1, n-1])) = S(\overline{T}(0{\bf f}[1, n-1])) = m(n) + \frac{1}{2}S_0 - S_1 + \frac{1}{2}S_2 \\
       S(T(1{\bf f}[1, n-1])) = S(T(0{\bf f}[1,n-1])) - S_2 + S_1 =
        m(n) + \frac{1}{2}S_0 - \frac{1}{2}S_2 \\
        S(\overline{T}(1{\bf f}[1, n-1]) = S(T(0{\bf f}[1,n-1])) - S_0 + S_1 =
        m(n) - \frac{1}{2}S_0 + \frac{1}{2}S_2.
    \end{gather*}
    If $|{\bf f}[1,n-1]|_0$ is even then
    \begin{gather*}
        S(T(0{\bf f}[1, n-1])) = m(n) +S_0 -S_1\\
        S(\overline{T}(0{\bf f}[1, n-1]) = S(T(0{\bf f}[1,n-1])) - S_0 + S_2 = 
        m(n) -S_1 + S_2\\
        S(\overline{T}(1{\bf f}[1, n-1]) = S(T(1{\bf f}[1, n-1]))
        =S(T(0{\bf f}[1,n-1])) - S_0 + S_1 = m(n).
    \end{gather*}
\end{corollary}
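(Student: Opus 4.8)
The plan is to obtain Corollary~\ref{cor:applying_S} as a direct computation from Theorem~\ref{vector}. First I would specialize that theorem to the prefix ${\bf f}[1,n-1]$ by substituting $n-1$ for $n$ throughout, so that the controlling quantity becomes $h(n-1) = \lfloor n\alpha \rfloor - \lfloor \alpha \rfloor = \lfloor n\alpha \rfloor$, the last equality holding because $0 < \alpha < 1$ forces $\lfloor \alpha \rfloor = 0$. Under this substitution the two parity cases of Theorem~\ref{vector} correspond exactly to $|{\bf f}[1,n-1]|_0$ being odd or even, so the six Parikh vectors named in the corollary are precisely the ones the theorem supplies.

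Next I would apply $S$ as the linear functional $w \mapsto S_0|w|_0 + S_1|w|_1 + S_2|w|_2$ to each of these Parikh vectors. This writes every value $S(T(\cdot))$ and $S(\overline{T}(\cdot))$ as a linear combination of $n$, $h(n-1)$, and the constants $S_0,S_1,S_2$ (with half-integer coefficients in the odd case, which recombine into integers since each Parikh component is a genuine letter count). I would then invoke the stated identity $\lfloor n\alpha \rfloor = 2n - \lfloor n\phi \rfloor - 1$ to eliminate $h(n-1) = \lfloor n\alpha \rfloor$ in favour of $\lfloor n\phi \rfloor$. Collecting the terms common to all six expressions recovers exactly the generalized Beatty quantity $m(n)$ of \eqref{main_terms}, and the letter-dependent remainder appears as the explicit $\pm S_i$ corrections claimed on each line.

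The internal relations, such as $S(T(1{\bf f}[1,n-1])) = S(T(0{\bf f}[1,n-1])) - S_2 + S_1$, I would read off more economically by differencing Parikh vectors rather than recomputing each value from scratch: in the odd case $\psi(T(1{\bf f}[1,n-1]))$ and $\psi(T(0{\bf f}[1,n-1]))$ differ by $(0,+1,-1)$, giving an $S$-difference of $S_1 - S_2$, and likewise $\psi(\overline{T}(0{\bf f}[1,n-1]))$ exceeds $\psi(T(0{\bf f}[1,n-1]))$ by $(-1,0,+1)$ in the even case, giving $-S_0 + S_2$; the other relations follow identically. The task is essentially bookkeeping, and the only places demanding care are the index shift from $n$ to $n-1$, the handling of the half-integer entries in the odd case, and checking that the common terms assemble into precisely $m(n)$ rather than a nearby expression. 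Once the first case is verified in full, the remaining cases follow by the same mechanical substitution.
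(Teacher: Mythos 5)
Your proposal is correct and matches the paper's own derivation: the paper likewise obtains the corollary by applying the linear functional $S(w) = S_0|w|_0 + S_1|w|_1 + S_2|w|_2$ to the Parikh vectors of Theorem~\ref{vector} (shifted to the prefix ${\bf f}[1,n-1]$) and then using the identity $\lfloor n\alpha \rfloor = 2n - \lfloor n\phi \rfloor - 1$ to collect the common terms into $m(n)$. Your differencing shortcut for the internal relations is a minor bookkeeping convenience, not a different argument.
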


Define
\begin{align*}
k_1=o_1 &= \frac{1}{2}S_0 - S_1 + \frac{1}{2}S_2 &
k_2=o_2 &= \frac{1}{2}S_0 - \frac{1}{2}S_2 &
k_3=o_3 &= -\frac{1}{2}S_0 + \frac{1}{2}S_2 \\
k_4=e_1 &= S_0 - S_1 &
k_5=e_2 &=  -S_1 + S_2 &
k_6=e_3 &=0.
\end{align*}

 We will refer to the $m(n)$'s as \emph{main terms} and the $k_i$'s as \emph{offsets}.

\begin{theorem}\label{thm:g formulas}
Define $\mu (n) = [(n-1-\lfloor (n-1) \alpha \rfloor) \bmod 2]$. Then
$S(\mathcal{L}_{n,{\bf t}}) = \{g_1(n), g_2(n), g_3(n)\}$, where
\begin{align*}
g_1(n) &= m(n) + e_1 + o_3\mu (n) \\
g_2(n) &= m(n) + e_2 + (o_2 - e_2)\mu (n) \\
g_3(n) &= m(n) + o_3\mu (n).
\end{align*}
\end{theorem}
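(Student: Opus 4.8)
The plan is to reduce the computation of $S(\mathcal{L}_{n,{\bf t}})$ to the four \emph{generating prefixes} and then quote Corollary~\ref{cor:applying_S}. First I would show that $\psi(\mathcal{L}_{n,{\bf t}})$ is realized entirely by the length-$n$ words $T(0{\bf f}[1,n-1])$, $T(1{\bf f}[1,n-1])$, $\overline{T}(0{\bf f}[1,n-1])$, $\overline{T}(1{\bf f}[1,n-1])$. Indeed, by Lemma~\ref{L} every length-$n$ factor of ${\bf t}$ is the image under $T$ or $\overline{T}$ of a length-$n$ factor of ${\bf f}$; since ${\bf f}$ is Sturmian its length-$n$ factors realize exactly two Parikh vectors, namely those of the factors $0{\bf f}[1,n-1]$ and $1{\bf f}[1,n-1]$ (which differ in their number of $0$'s by one); and Theorem~\ref{vector} shows that applying $T$ and $\overline{T}$ to these two prefixes produces exactly three distinct Parikh vectors. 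As $\rho_{\bf t}(n)=3$ by Lemma~\ref{complexity3}, these three must be all of $\psi(\mathcal{L}_{n,{\bf t}})$. Since $S$ depends only on the Parikh vector of a word, $S(\mathcal{L}_{n,{\bf t}})$ is precisely the set of $S$-values of the four generating prefixes, all of which lie in $\mathcal{L}_{\bf t}$ by the discussion preceding Theorem~\ref{vector}.

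Next I would read off those $S$-values from Corollary~\ref{cor:applying_S}, which splits into two cases according to the parity of $|{\bf f}[1,n-1]|_0$: the even case gives $\{m(n)+e_1,\, m(n)+e_2,\, m(n)\}$ and the odd case gives $\{m(n)+o_1,\, m(n)+o_2,\, m(n)+o_3\}$. It then remains to check that the single triple $g_1,g_2,g_3$ reproduces both cases as $\mu(n)$ ranges over $\{0,1\}$. At $\mu(n)=0$ the formulas collapse to $\{m(n)+e_1,\, m(n)+e_2,\, m(n)\}$, matching the even case verbatim. At $\mu(n)=1$ they give $\{m(n)+e_1+o_3,\, m(n)+o_2,\, m(n)+o_3\}$, so here I would verify the one nontrivial identity $e_1+o_3=o_1$, which is immediate from the definitions since $(S_0-S_1)+(-\frac12 S_0+\frac12 S_2)=\frac12 S_0-S_1+\frac12 S_2$; the corresponding matches for $g_2$ and $g_3$ are built into their definitions. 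Thus, granting the case selection, the two displayed cases coincide with $\{g_1(n),g_2(n),g_3(n)\}$.

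The crux — and the step I expect to be the main obstacle — is to confirm that $\mu(n)$ is exactly the indicator of the parity of $|{\bf f}[1,n-1]|_0$, so that the case selection above is legitimate. For this I would compute $|{\bf f}[1,n-1]|_0$ in closed form: the telescoping used in the proof of Theorem~\ref{vector} gives $|{\bf f}[1,n-1]|_1=\lfloor n\alpha\rfloor-\lfloor\alpha\rfloor$, whence $|{\bf f}[1,n-1]|_0=(n-1)-\lfloor n\alpha\rfloor$, and then I would pass to $\phi$ via the identity $\lfloor n\alpha\rfloor=2n-\lfloor n\phi\rfloor-1$ recorded before Corollary~\ref{cor:applying_S}. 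Reducing the result modulo $2$ and comparing with $\mu(n)$ is purely arithmetic, but it is exactly where the floor functions must be handled with care: the Beatty index appearing in $\mu(n)$ and the one appearing in the closed form for $|{\bf f}[1,n-1]|_0$ differ by one, so the delicate point is to verify that the two quantities nonetheless agree modulo $2$ for every $n$. This parity bookkeeping is the single place where the argument is most likely to require careful attention, and once it is settled Theorem~\ref{thm:g formulas} follows by combining it with the two cases of Corollary~\ref{cor:applying_S}.
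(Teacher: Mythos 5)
Your overall route is exactly the paper's: reduce $S(\mathcal{L}_{n,{\bf t}})$ to the four generating prefixes (via Lemma~\ref{L}, Lemma~\ref{complexity3}, and Theorem~\ref{vector}), read off their $S$-values from the two cases of Corollary~\ref{cor:applying_S}, and observe that $e_1+o_3=o_1$ (and $e_3=0$) so that a single triple of formulas, switched by $\mu(n)$, covers both parities. All of that is correct, and your identification of the parity bookkeeping as the crux is exactly right. The problem is that this last step, which you defer, cannot be completed: the two quantities you hope agree modulo $2$ do not. Indeed $|{\bf f}[1,n-1]|_0=(n-1)-\lfloor n\alpha\rfloor$, while $\mu(n)$ uses $(n-1)-\lfloor (n-1)\alpha\rfloor$; their difference is $\lfloor n\alpha\rfloor-\lfloor (n-1)\alpha\rfloor={\bf f}[n-1]$, so the two parities disagree precisely when ${\bf f}[n-1]=1$, which happens for a positive-density set of indices $n$. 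No care with floor functions will repair this; the verification you plan is of a false statement.

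Concretely, take $(S_0,S_1,S_2)=(1,1,2)$ and $n=3$. Then $|{\bf f}[1,2]|_0=1$ is odd, and the odd case of Corollary~\ref{cor:applying_S} gives $S(\mathcal{L}_{3,{\bf t}})=\{m(3)+o_1,\,m(3)+o_2,\,m(3)+o_3\}=\{4,3,4\}=\{3,4\}$, which one can confirm directly from the length-$3$ factors $021,201,101,121$ of ${\bf t}$. But $\mu(3)=[(2-\lfloor 2\alpha\rfloor)\bmod 2]=0$, so the displayed formulas yield $\{m(3)+e_1,\,m(3)+e_2,\,m(3)\}=\{3.5,4.5\}$ --- not even integers. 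So Theorem~\ref{thm:g formulas} as printed contains an off-by-one error: the correct definition is $\mu(n)=[(n-1-\lfloor n\alpha\rfloor)\bmod 2]$, equivalently the parity of $\lfloor n\phi\rfloor-n$, and with that correction your argument goes through verbatim. For comparison, the paper's own proof simply asserts that $e_i+(o_i-e_i)\mu(n)$ selects $o_i$ when $|{\bf f}[1,n-1]|_0$ is odd and $e_i$ when it is even, with no justification --- which is how the slip escaped notice. Your proposal is more honest about where the real content lies, but as written it neither proves that claim (impossible) nor corrects the statement; to finish, you must replace the deferred verification with the corrected $\mu$ and then carry out the (now genuinely routine) parity computation.
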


\begin{proof}
Note that $e_i + (o_i - e_i) \mu (n)$ is $o_i$  when $|{\bf f}[1, n-1]|_0$
is odd and $e_i$ when $|{\bf f}[1, n-1]|_0$ is even. We therefore obtain the
equations
\begin{gather*}
g_1(n) = m(n) + e_1 + (o_1 - e_1)[(n-1-\lfloor (n-1) \alpha \rfloor) \bmod 2] \\
g_2(n) = m(n) + e_2 + (o_2 - e_2)[(n-1-\lfloor (n-1) \alpha \rfloor) \bmod 2] \\
g_3(n) = m(n) + e_3 + (o_3 - e_3)[(n-1-\lfloor (n-1) \alpha \rfloor) \bmod 2] 
\end{gather*}
from Corollary~\ref{cor:applying_S}.
\end{proof}

\begin{theorem}
  The word ${\bf t}$ does not have the Frobenius property.
\end{theorem}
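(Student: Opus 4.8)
The plan is to disprove the Frobenius property by exhibiting a single homomorphism $S$ with $\gcd(S_0,S_1,S_2)=1$ for which $\N\setminus S(\mathcal{L}_{\bf t})$ is infinite. The guiding idea is that, by Theorem~\ref{thm:g formulas}, every element of $S(\mathcal{L}_{n,{\bf t}})$ lies within a bounded window around the main term $m(n)$, the width of the window depending only on the offsets $k_i$. If I choose the coefficients so that the increasing sequence $m(n)$ advances in steps strictly larger than this window width, then $\bigcup_n S(\mathcal{L}_{n,{\bf t}})$ will be forced to skip integers infinitely often.

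Concretely, I would set $S_0=S_2$. Substituting this into the offset definitions gives $o_2=o_3=0$ and $e_1=e_2=o_1=S_0-S_1$, so the three functions of Theorem~\ref{thm:g formulas} collapse to at most two distinct values: $g_3(n)=m(n)$ and $g_1(n)=m(n)+(S_0-S_1)$, with $g_2(n)$ always equal to one of these. Hence $S(\mathcal{L}_{n,{\bf t}})=\{m(n),\,m(n)+(S_0-S_1)\}$. With the explicit choice $S_0=S_2=3$, $S_1=2$ (so that $\gcd(3,2,3)=1$), formula~\eqref{main_terms} simplifies, since $S_0-2S_1+S_2=2$ and $S_0-4S_1+S_2=-2$, to $m(n)=\lfloor n\phi\rfloor+n$, and therefore $S(\mathcal{L}_{n,{\bf t}})=\{m(n),\,m(n)+1\}$.

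The next step is a gap analysis of $m(n)$. Since $m(n+1)-m(n)=(\lfloor(n+1)\phi\rfloor-\lfloor n\phi\rfloor)+1$ and the Fibonacci increments $\lfloor(n+1)\phi\rfloor-\lfloor n\phi\rfloor$, whose sequence is $21221212\cdots$, equal $2$ for infinitely many $n$, the gap $m(n+1)-m(n)$ equals $3$ infinitely often. For each such $n$ the values contributed at index $n$ are $m(n),m(n)+1$ and those at index $n+1$ are $m(n)+3,m(n)+4$, so $m(n)+2$ is contributed by neither. I would then confirm it is contributed by no index at all: monotonicity of $m$ shows that for $n'\le n$ the largest contributed value is $m(n')+1\le m(n)+1$, while for $n'\ge n+1$ the smallest is $m(n')\ge m(n)+3$, both missing $m(n)+2$. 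As these integers $m(n)+2$ are distinct for distinct such $n$, we obtain infinitely many elements of $\N\setminus S(\mathcal{L}_{\bf t})$, proving the theorem.

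The main obstacle is the bookkeeping in the final step: one must choose the coefficients so that simultaneously the window width $S_0-S_1=1$ is strictly smaller than the large gap $3$ (in fact smaller by at least $2$, so that an integer survives in the open interval $(m(n)+1,\,m(n)+3)$), the two contributed values stay genuinely distinct (forcing $S_0\ne S_1$), and at least one integer remains uncovered per large gap (forcing $S_1\ge 2$). Obtaining the clean closed form $m(n)=\lfloor n\phi\rfloor+n$ is precisely what makes the non-coverage transparent and guarantees the windows around consecutive $m(n)$ never overlap across a large gap; with a less fortunate choice of $S$ the windows could overlap and the simple monotonicity argument would break down.
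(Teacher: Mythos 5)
Your proposal is correct, but it proves the theorem by a genuinely different route than the paper. You exhibit a single explicit counterexample: taking $S_0=S_2=3$, $S_1=2$, the offsets collapse (since $o_2=o_3=0$ and $e_1=e_2=o_1=S_0-S_1=1$), Theorem~\ref{thm:g formulas} gives $S(\mathcal{L}_{n,{\bf t}})=\{m(n),m(n)+1\}$ with $m(n)=\lfloor n\phi\rfloor+n$, and the gaps $m(n+1)-m(n)=\lfloor(n+1)\phi\rfloor-\lfloor n\phi\rfloor+1$ equal $3$ infinitely often, so the integers $\lfloor n\phi\rfloor+n+2$ are missed at those indices; your monotonicity check that no other length can cover them is sound, and spot-checking small lengths of ${\bf t}$ confirms the formula. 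The paper instead runs a density argument valid for \emph{every} map $S$: since each length contributes at most $3$ values and the main term grows with slope roughly $\tfrac12[(\phi-1)(S_0+S_2)+2(2-\phi)S_1]$, the natural density of $S(\mathcal{L}_{\bf t})$ is strictly less than $1$ whenever some $S_i\geq 8$. The paper's version is stronger and is needed later: it reduces the classification of co-finite triples to the finite range $S_i\leq 7$, enabling the computer search behind Table~\ref{tab:finite_complement}. Your version is more elementary and explicit (concrete missing integers, no limiting density), though it yields only the bare non-Frobenius statement. It is also worth noting that your choice $S_0=S_2$ is exactly the symmetric case the paper later observes reduces to Dekking's analysis of the Fibonacci word, since then $S(w)$ depends only on the underlying factor of ${\bf f}$; your counterexample is in effect the pair $(a,b)=(3,2)$ for ${\bf f}$, which fails the density requirement $af_0+bf_1\leq 2$ for co-finiteness there (incidentally, this shows the paper's sentence quoting Dekking's Fibonacci result should read ``co-finite \emph{only} when,'' not ``except when'').
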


\begin{proof}
  From Theorem~\ref{thm:g formulas} we see that among the first $\max
  \{g_1(n),g_2(n),g_3(n)\}$ natural numbers, at most $3n$ are in
  $S(\mathcal{L}_{\bf t})$.  From \eqref{main_terms} and Theorem~\ref{thm:g
    formulas} we find that there is a constant $C$ such that for
  $n \geq 1$, we have
  $$\max \{g_1(n),g_2(n),g_3(n)\} \geq {\frac{1}{2} n \phi (S_0 - 2S_1
    + S_2)} -{\frac{1}{2} n(S_0 - 4S_1 + S_2)} + C.$$  Let
  $$\delta := \lim_{n\to\infty} \frac{|S(\mathcal{L}_{\bf t}) \cap
    \{1,\ldots,n\}|}{n}$$
  denote the natural density of $S(\mathcal{L}_{\bf t})$.  Then
\begin{align*}
  \delta &\leq \lim_{n \to \infty}
  \frac{3n}{\frac{1}{2} n \phi (S_0 - 2S_1 + S_2) -\frac{1}{2} n(S_0 -
    4S_1 + S_2) +C} \\
  &= \frac{6}{ (\phi-1) (S_0 + S_2) + 2(2 - \phi)S_1}.
\end{align*}
  The denominator of this last expression is approximately
  $0.618(S_0+S_2)+0.764S_1$.  Since each $S_i$ is at least $1$, we see
  that if any $S_i$ is at least $8$, this denominator is larger than
  $6$ and hence $\delta<1$.  It follows that if $S_i \geq 8$ for some
  $i$, then $S(\mathcal{L}_{\bf t})$ has an infinite complement.  Thus
  ${\bf t}$ does not have the Frobenius property.
\end{proof}

Next, we determine the maps $S$ for which $S(\mathcal{L}_{\bf t})$ is
co-finite.  We only have to consider those $S$ for which $S_i \leq 7$
for $i=1,2,3$.  We will show that it is possible to determine if
$S(\mathcal{L}_{\bf t})$ is co-finite by checking (by computer) a finite
initial segment of the sequence $m(n)$.  We begin with an analysis of
the first difference sequence
\begin{align*}
  \Delta m(n) &= m(n+1)-m(n) \\
  &= \frac{1}{2} (\lfloor (n+1)\phi \rfloor
-\lfloor (n)\phi \rfloor)(S_0 -2S_1 +S_2) - \frac{1}{2}(S_0 -4S_1
+S_2)\\
&= (\lfloor (n+1)\phi \rfloor - \lfloor n
\phi \rfloor)k_1 - k_1 + S_1.
\end{align*}
Recalling that $(\lfloor (n+1)\phi \rfloor - \lfloor n
\phi \rfloor)_{n \geq 1}$ is equal to the Fibonacci sequence over
$\{2,1\}$, we see that $\Delta m(n)$ is equal to the Fibonacci sequence
over $\{k_1+S_1, S_1\}$.  Let $F = (\Delta m(n))_{n \geq 1}$; i.e, 
$F[n] = k_1+S_1$ if ${\bf f}[n]=0$ and $F[n] = S_1$ if ${\bf f}[n]=1$.
There is one degenerate case to consider here, namely, the case where
$k_1=0$.  In this case $F$ is constant with each term equal to $S_1$.
However, the analysis below is not affected by this degenerate situation.

Let
 \[ k=\text{max}\{|k_i|: i= 1,2,\ldots, 6\}, \]
and for a given factor $F[i,j]$ of $F$, let 
 \[ I (F[i, j]) = \left [k+1, \sum_{q=i}^{j+1}F[q] -(k+1) \right ].\] 
 
\begin{definition}[Semi-image]
We define the \emph{even semi-image} of $F[i,j]$ as $$\mathbb{S}^0(F[i,j]) = \left \{ \sum_{q=i}^s F[q] + e_r+(o_r - e_r)\left [ |{\bf f}[i,s]|_0 \bmod{2}  \right ]: r = 1,2,3, \mbox{ and } s=i, \ldots, j \right \}$$
and the \emph{odd semi-image} of $F[i,j]$ as $(k_1 +S_1) + \mathbb{S}^1(F[i,j])$ where
$$\mathbb{S}^1(F[i,j]) = \left \{\sum_{q=i}^s F[q] + e_r+(o_r - e_r)\left [ 1-|{\bf f}[i,s]|_0 \bmod{2}  \right ]: r = 1,2,3, \mbox{ and } s=i, \ldots, j \right \}$$
\end{definition}

These formulas are analogous to the ones from Theorem \ref{thm:g formulas}, but instead of using the generating prefixes we can use \textit{any} factor of $F$. Since, by the WELLDOC property, each factor of $F$ appears with either parity of $(k_1+S_1)$-steps prior to it, we must have two semi-images; the even (resp. odd) semi-image represents the image of the factor with an even (resp. odd) number of $(k_1+S_1)$-steps before it. The odd semi-image is shifted by $k_1+S_1$ to account for non-integral $k_1$ but the same lines of reasoning will apply. 

\begin{definition}[Semi-complement] 
  We define the \emph{even semi-complement} as $$\mathbb{K}^0(F[i,j]) = (I(F[i,j]) \setminus \mathbb{S}^0(F[i,j])) \cap \mathbb{N}$$ and the \emph{odd semi-complement} as $$\mathbb{K}^1(F[i,j])  = ([(k_1 + S_1) + I(F[i,j])] \setminus [(k_1+S_1)+ \mathbb{S}^1(F[i,j])]) \cap \mathbb{N} $$ 
\end{definition}

\begin{example}\label{example: semicomplement}
 Consider the triple $(1,1,2)$. The odd offsets are \{0.5, -0.5,
 0.5\}, the even offsets are \{0,1,0\}, $$(m(n))_{n \geq 1} = (1, 2.5,
 3.5, 5, 6.5, 7.5, 9, 10, 11.5, 13, 14, \ldots ),$$ and $$F = (1.5, 1,
 1.5, 1.5, 1, 1.5, 1, 1.5, 1.5, 1, \ldots ).$$ Let $w=F[1,4]=(1.5, 1,
 1.5, 1.5)$. Then we have $k=1$, $I(w) = [ 2, 4.5 ]$, $\mathbb{S}^0(w) = \{1,
 2, 3, 4, 5, 6 \}$, and $\mathbb{K}^0(w)= \{2, 3, 4\}  \setminus \{1, 2, 3, 4,
 5, 6\} = \emptyset$.  We also have $\mathbb{S}^1(w) = \{1.5, 2.5,
 3.5, 4.5, 5.5, 6.5\}$, and $\mathbb{K}^1(w)= \{4,5,6\} \setminus \{3,
 4, 5, 6, 7, 8\} = \emptyset$.
\end{example}

\begin{theorem}\label{fullness} Fix $(S_0, S_1, S_2)$ and let $l = \left \lceil \frac{2(k+1)}{\min \{S_1, k_1+S_1\}} \right \rceil$. Then the complement of $S(\mathcal{L}_{\bf t})$ is finite if and only if $\mathbb{K}^0(F[i, i+l-1]) = \mathbb{K}^1(F[i, i+l-1]) = \emptyset$ for all $i \geq 1$.
\end{theorem}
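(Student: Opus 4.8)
The plan is to translate Theorem~\ref{thm:g formulas} into a statement about length-$l$ windows of $F$ and then run a covering argument. First I would record that $\Delta m(n)=F[n]$ gives $m(s+1)=m(i)+\sum_{q=i}^{s}F[q]$, and that the global parity $\mu(s+1)=|{\bf f}[1,s]|_0\bmod 2$ decomposes as $|{\bf f}[1,i-1]|_0+|{\bf f}[i,s]|_0\pmod 2$. Substituting into $g_r(n)=m(n)+e_r+(o_r-e_r)\mu(n)$ then yields, for the window $F[i,j]$ with $j=i+l-1$,
\[
\{\,g_r(s+1): i\le s\le j,\ 1\le r\le 3\,\}=
\begin{cases}
m(i)+\mathbb{S}^0(F[i,j]), & |{\bf f}[1,i-1]|_0\ \text{even},\\[2pt]
m(i)+\mathbb{S}^1(F[i,j]), & |{\bf f}[1,i-1]|_0\ \text{odd},
\end{cases}
\]
the parity flip $1-|{\bf f}[i,s]|_0\bmod 2$ in $\mathbb{S}^1$ being precisely what absorbs the summand $|{\bf f}[1,i-1]|_0$ in the odd case. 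Since $m(1)=S_1$ and every increment $F[q]$ equals $S_1$ or $k_1+S_1=\tfrac12(S_0+S_2)$, there is an integer base $b_i$ — equal to $m(i)$ in the even case and to $m(i)-(k_1+S_1)$ in the odd case — and the rewriting $m(i)+\mathbb{S}^1=b_i+((k_1+S_1)+\mathbb{S}^1)$; this is the role of the shift by $k_1+S_1$ in the odd semi-image and semi-complement. With this dictionary, the integers lying in the real interval $m(i)+I(F[i,j])$ but \emph{not} realized as any $g_r(s+1)$ with $i\le s\le j$ are exactly $b_i+\mathbb{K}^\varepsilon(F[i,j])$, where $\varepsilon$ records the phase at $i$.

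For the ``if'' direction, suppose all semi-complements are empty and let $N$ be a large integer. The guaranteed intervals $m(i)+I(F[i,i+l-1])=[\,m(i)+k+1,\ m(i)+\Sigma_i-(k+1)\,]$, with $\Sigma_i=\sum_{q=i}^{i+l}F[q]$, overlap as $i$ increases: the left endpoint $m(i)+F[i]+k+1$ of window $i+1$ is at most the right endpoint $m(i)+\Sigma_i-(k+1)$ of window $i$, because $\Sigma_i-F[i]=\sum_{q=i+1}^{i+l}F[q]\ge l\cdot\min\{S_1,k_1+S_1\}\ge 2(k+1)$ by the choice of $l$. Hence these intervals cover $[\,m(1)+k+1,\infty)$, so $N$ lies in some $m(i)+I(F[i,i+l-1])$; as the relevant semi-complement is empty, $N=b_i+v$ for some $v$ in the corresponding semi-image, i.e.\ $N=g_r(s+1)\in S(\mathcal{L}_{\bf t})$. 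Thus $\mathbb{N}\setminus S(\mathcal{L}_{\bf t})\subseteq\{1,\dots,S_1+k\}$ is finite.

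For the converse I argue the contrapositive. Suppose $v$ is an integer in $\mathbb{K}^0(F[i_0,i_0+l-1])$ (the $\mathbb{K}^1$ case is identical). Since ${\bf t}$, hence ${\bf f}$, is uniformly recurrent (Lemma~\ref{complexity3}) and has the WELLDOC property, the length-$l$ factor at $i_0$ recurs infinitely often and, by WELLDOC with $m=2$, infinitely often in even phase; at each such occurrence $p$ the integer $m(p)+v$ lies in the \emph{interior} $[\,m(p)+k+1,\ m(p)+\Sigma_p-(k+1)\,]$. The trimming by $k+1$ now forces $m(p)+v$ to equal no $g_r(n)$ whatsoever: for $n\le p$ we have $g_r(n)\le m(n)+k\le m(p)+k<m(p)+v$; for $n\ge p+l+1$ we have $g_r(n)\ge m(n)-k\ge m(p)+\Sigma_p-k>m(p)+v$; and for the window indices $p+1\le n\le p+l$ the value is excluded since $v\notin\mathbb{S}^0(F[i_0,i_0+l-1])$. (Here $|g_r(n)-m(n)|\le k$ because every offset $g_r-m$ is one of the six $k_i$, namely $\{e_1,o_1\},\{e_2,o_2\},\{e_3,o_3\}$.) Each such $p$ therefore contributes a distinct element $m(p)+v$ to $\mathbb{N}\setminus S(\mathcal{L}_{\bf t})$, and as $m(p)\to\infty$ the complement is infinite.

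The main obstacle is the dictionary of the first paragraph: pinning down the phase bookkeeping so that the two semi-images (with the $k_1+S_1$ shift in the odd case) correspond \emph{exactly} to the $g$-values of a window, and checking that the ``$\cap\,\mathbb{N}$'' in $\mathbb{K}^0,\mathbb{K}^1$ selects precisely the missed integers. Once that is in place, the overlap of the guaranteed intervals (from the choice of $l$) handles coverage and the trimming by $k+1$ handles non-interference from neighbouring windows, so both implications follow.
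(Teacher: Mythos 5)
Your proof is correct and follows essentially the same route as the paper: your overlap inequality $\sum_{q=i+1}^{i+l}F[q]\ge l\cdot\min\{S_1,k_1+S_1\}\ge 2(k+1)$ is exactly the paper's Lemma~\ref{cover}, your trimming-by-$(k+1)$ argument excluding $g$-values with $n\le p$ or $n\ge p+l+1$ is exactly the paper's Lemma~\ref{overlap}, and the converse via WELLDOC with $m=2$ to recur the window in a fixed phase is identical. The only difference is presentational: you spell out the dictionary identifying the $g$-values of a window with $m(i)+\mathbb{S}^{\varepsilon}(F[i,i+l-1])$ (including the integer-base bookkeeping behind the $k_1+S_1$ shift in the odd case), which the paper leaves implicit.
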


We need two preliminary lemmas.

\begin{lemma}\label{cover}
Let $R(F[i, i+l-1]) = \sum_{q=1}^{i-1} F[q] + I(F[i,i+l-1])$. Then $$\bigcup_{i \geq 1} R(F[i, i+l-1]) \supseteq \{n \in \mathbb{N}: n> k \}.$$ 
\end{lemma}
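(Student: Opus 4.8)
The plan is to recognize each set $R(F[i,i+l-1])$ as a genuine interval of real numbers and then to show that consecutive intervals overlap, so that their union is one unbroken ray starting at $k+1$. First I would set $\Sigma_0 = 0$ and $\Sigma_m = \sum_{q=1}^m F[q]$ for $m \ge 1$. Unwinding the definitions, the constant $\sum_{q=1}^{i-1} F[q] = \Sigma_{i-1}$ merely translates $I(F[i,i+l-1]) = \bigl[\,k+1,\ \Sigma_{i+l}-\Sigma_{i-1}-(k+1)\,\bigr]$, so that
\[
R(F[i,i+l-1]) = \bigl[\,\Sigma_{i-1}+(k+1),\ \Sigma_{i+l}-(k+1)\,\bigr].
\]
Each entry of $F$ is either $S_1$ or $k_1+S_1=\tfrac12(S_0+S_2)$, and both are at least $1$; hence $(\Sigma_m)$ is strictly increasing with $\Sigma_m\to\infty$, the endpoints of $R(F[i,i+l-1])$ increase with $i$, and the right endpoints tend to infinity.

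The crux is the overlap of $R(F[i,i+l-1])$ and $R(F[i+1,i+l])$, i.e.\ that the left endpoint of the second does not exceed the right endpoint of the first:
\[
\Sigma_i+(k+1)\ \le\ \Sigma_{i+l}-(k+1),\qquad\text{equivalently}\qquad \Sigma_{i+l}-\Sigma_i\ \ge\ 2(k+1).
\]
Now $\Sigma_{i+l}-\Sigma_i=\sum_{q=i+1}^{i+l}F[q]$ is a sum of exactly $l$ entries of $F$, each bounded below by $\mu:=\min\{S_1,\,k_1+S_1\}>0$, so it is at least $l\mu$. Since $l=\lceil 2(k+1)/\mu\rceil$, we get $l\mu\ge 2(k+1)$, which is precisely the overlap inequality; this step is also where I would note that $\mu>0$ guarantees $l$ is a well-defined positive integer.

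To finish, I would assemble a standard chain-of-intervals argument: a family $[a_i,b_i]$ with $a_1\le a_2\le\cdots$, $a_{i+1}\le b_i$ for all $i$, and $b_i\to\infty$ has union $[a_1,\infty)$. Here $a_1 = \Sigma_0+(k+1)=k+1$, so $\bigcup_{i\ge1}R(F[i,i+l-1]) = [\,k+1,\infty)$. Finally, because $k_1=\tfrac12(k_4+k_5)$ and $|k_2|=|k_3|=\tfrac12|S_0-S_2|\le\max\{|k_4|,|k_5|\}$, the maximum $k=\max_i|k_i|$ is always attained among the integer-valued offsets $k_4,k_5,k_6$, so $k\in\mathbb{N}$ and $k+1$ is exactly the least natural number exceeding $k$. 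Thus $[\,k+1,\infty)$ contains every $n\in\mathbb{N}$ with $n>k$, as claimed. I expect the only genuinely delicate part to be the overlap bookkeeping in the second paragraph---checking that the two $\pm(k+1)$ shifts and the telescoped partial sums cancel so that the slack provided by $l$ is spent exactly on bridging consecutive intervals; the remaining steps are routine.
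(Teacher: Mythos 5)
Your proof is correct and follows essentially the same route as the paper: the paper's entire argument consists of reducing the claim to the overlap of consecutive intervals, i.e.\ $\sum_{q=1}^{i}F[q]+k+1 \le \sum_{q=1}^{i+l}F[q]-(k+1)$, and verifying it via $\sum_{q=i+1}^{i+l}F[q] \ge l\min\{S_1,k_1+S_1\} \ge 2(k+1)$, which is exactly your key step. The extra bookkeeping you supply --- the explicit endpoints $\bigl[\Sigma_{i-1}+(k+1),\,\Sigma_{i+l}-(k+1)\bigr]$, the chain-of-intervals assembly, and the observation that $k=\max\{|k_4|,|k_5|,0\}$ is an integer so that every natural $n>k$ indeed satisfies $n\ge k+1$ --- is detail the paper leaves implicit, and it is all correct.
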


\begin{proof}
It suffices to show that $$\sum_{q=1}^{i}F[q] + k+1 \leq \sum_{q=1}^{i+l}F[q] - (k+1),$$ which happens if and only if $$ 2(k+1) \leq \sum_{q=1}^{i+l}F[q] - \sum_{q=1}^{i}F[q] = \sum_{q=i+1}^{i+l}F[q].$$ Since we have $ \sum_{q=i+1}^{i+l}F[q] \geq l \min \{S_1, k_1+S_1\} \geq 2(k+1)$, we are done.
\end{proof}

\begin{lemma}\label{overlap}
If  $x \in R(F[i, i+l-1])$ then $$x \neq \sum_{q=1}^{i-1-s} F[q]+ k_j
\quad\text{ and }\quad x \neq \sum_{q=1}^{i+l+s}F[q] +k_j$$
for every $s \geq 0$ and $j=1,2,\ldots,6$.
\end{lemma}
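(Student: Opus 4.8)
The plan is to reduce the whole statement to the monotonicity of the partial sums of $F$ together with the one-unit buffer built into the definition of $I$. Write $P_m = \sum_{q=1}^{m} F[q]$ for the $m$-th partial sum (with $P_0 = 0$). The terms $F[q] \in \{S_1,\,k_1+S_1\}$ are positive — this is implicit already in the definition of $l$, which requires $\min\{S_1, k_1+S_1\}>0$, and indeed $k_1+S_1 = \tfrac12(S_0+S_2)>0$ — so $(P_m)_{m\geq 0}$ is strictly increasing. In particular $P_m \leq P_{i-1}$ for all $m \leq i-1$ and $P_m \geq P_{i+l}$ for all $m \geq i+l$. The first thing I would do is rewrite $R$ in closed form: unwinding the definitions and using $\sum_{q=i}^{i+l} F[q] = P_{i+l}-P_{i-1}$ to simplify the upper endpoint gives
\[
R(F[i,i+l-1]) = P_{i-1} + I(F[i,i+l-1]) = \bigl[\,P_{i-1}+(k+1),\ P_{i+l}-(k+1)\,\bigr].
\]
Hence any $x \in R(F[i,i+l-1])$ satisfies $P_{i-1}+(k+1)\le x \le P_{i+l}-(k+1)$.

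Both non-equalities then follow by comparing $x$ with the two families of boundary terms, using $|k_j|\leq k$ (immediate from the definition of $k$) and the monotonicity above. For the left family, for every admissible $s\geq 0$ I would estimate
\[
\sum_{q=1}^{i-1-s} F[q] + k_j = P_{i-1-s}+k_j \;\leq\; P_{i-1}+k \;<\; P_{i-1}+(k+1)\;\leq\; x,
\]
so $x$ is strictly larger than each such value and equality is impossible. Symmetrically, for the right family,
\[
\sum_{q=1}^{i+l+s} F[q] + k_j = P_{i+l+s}+k_j \;\geq\; P_{i+l}-k \;>\; P_{i+l}-(k+1)\;\geq\; x,
\]
so $x$ is strictly smaller than each such value. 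This handles all $s\geq 0$ and all $j=1,\ldots,6$ at once.

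There is no deep obstacle here; the lemma is essentially immediate once $R$ is in closed form, and the entire content is that the interval $R$ is separated from every boundary term $P_m+k_j$ (for $m \leq i-1$ or $m \geq i+l$) by the strict margin $k+1 > k \geq |k_j|$. The only points needing care are bookkeeping. I would make sure to invoke the \emph{strict} inequality $k+1 > k$ rather than $\geq$, since it is exactly this one-unit buffer — deliberately inserted into the definition of $I$ — that produces the conclusion $x \neq \sum F[q] + k_j$. I would also state the index convention so the left-hand partial sums stay well defined: when $i-1-s<0$ the sum $\sum_{q=1}^{i-1-s}F[q]$ is read as the empty sum (or simply not considered), and in every valid case it is bounded above by $P_{i-1}$, so the same comparison applies verbatim.
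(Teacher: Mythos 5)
Your proof is correct and is essentially the same argument as the paper's: both rest on the monotonicity of the partial sums of $F$ (valid since $F[q]\in\{S_1,\,k_1+S_1\}$ with $k_1+S_1=\tfrac12(S_0+S_2)>0$), the bound $|k_j|\le k$, and the one-unit buffer $k+1$ in the definition of $I$. The paper merely compresses your two comparisons into the single chain $\sum_{q=1}^{i+l+s}F[q]+k_j \ge \sum_{q=1}^{i+l}F[q]-k > x > \sum_{q=1}^{i-1}F[q]+k \ge \sum_{q=1}^{i-1-s}F[q]+k_j$, so no further changes are needed.
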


\begin{proof}
For any $s \geq 0$ and $j=1,2,\ldots,6$ we have $$\sum_{q=1}^{i+l+s} F[q]+ k_j \geq \sum_{q=1}^{i+l} F[q] - k > x > \sum_{q=1}^{i-1} F[q]+ k \geq \sum_{q=1}^{i-1-s} F[q]+ k_j,$$ as required.
\end{proof}

\begin{proof}[Proof of Theorem \ref{fullness}]
We begin with the converse. First note that if $x$ is a factor of $F$
and $|x| = l$ then $\sum x_i > 2(k+1)$ so $I(x)$ is nonempty.  If
every semi-complement is empty,  then there exists a sequence
$(r(i))_{i\geq 1}$ on $\{0, 1\}$ such that
\begin{align*}
  \mathbb{N} \cap R(F[i, l+i-1]) &= \mathbb{N} \cap \left(
  \sum_{q=1}^{i-1} F[q] + I(F[i, i+l-1]) \right)\\
  &= \mathbb{N} \cap \left(
  \sum_{q=1}^{i-1} F[q] + \mathbb{S}^{r(i)}(F[i, i+l-1]) \right).
\end{align*}
By Lemma~\ref{cover}, we get that $S(\mathcal{L}_{\bf t})$ is co-finite. 

Now suppose that for some $i$ the set $\mathbb{K}^0(F[i,i+l-1])$
(resp.\ $\mathbb{K}^1(F[i,i+l-1])$) is non-empty, and so one of the
semi-images `misses' an integer $x_i$.  By the WELLDOC property, there
exist infinitely many indices $\left \{ i_r:r\in \mathbb{N} \right \}$
where $F[i_r, i_r+l-1] = F[i,i+l-1]$ and $|F[1, i_r-1]|_0$ is even
(resp.\ odd). Thus, for each $r$ there exists an integer $x_{i_r} \in
R(F[i_r,i_r+l-1])$ such that $x_{i_r} \notin \sum_{q=1}^{i_r-1} F[q] +
\mathbb{S}^0(F[i_r,i_r+l-1])$ (resp.\ $x_{i_r} \notin
\sum_{q=1}^{i_r-1} F[q] + \mathbb{S}^1(F[i_r,i_r+l-1])$).  By
Lemma~\ref{overlap}, $x_{i_r} \notin S(\mathcal{L}_{\bf t})$. Thus the
complement of $S(\mathcal{L}_{\bf t})$ is infinite.
\end{proof}

Note that by Lemma~\ref{L}, our results are symmetric with respect to
$S_0$ and $S_2$ and if $S_0 = S_2$ then all of the results in
\cite{dekking} hold. As well, any triple with a greatest common
divisor greater than one will have infinitely many elements in the
complement of $S(\mathcal{L}_{\bf t})$. Thus, in all of the following calculations we skip any triple $(x,y,z)$ where $gcd(x,y,z) > 1$, $x=z$, or where $(z,y,x)$ has already been evaluated. 

For each triple, we first calculate $l= \left \lceil
\frac{2(k+1)}{\min \{k_1+S_1, S_1\}} \right \rceil$ and then calculate
all $l+2$ factors\footnote{In the cases where $S_0+S_2 = 2S_1$,
  i.e. $F$ is constant, we merely check the semi-image for the single
  factor $F[1, l+1]$.} of length $l+1$ in\footnote{Different letters
  may follow different occurrences of each factor. The extra term at
  the end allows us to account for all possible values of $F[j+1]$
  when calculating $I(F[i,j])$.} $F$. We then calculate the
semi-complements of each factor of $F$, and by Theorem~\ref{fullness},
if we find a non-empty semi-complement we know that the complement of
$S(\mathcal{L}_{\bf t})$ is infinite; otherwise, the complement of
$S(\mathcal{L}_{\bf t})$ is finite. We
found 13 triples with finite complements. These are listed in
Table~\ref{tab:finite_complement}.

\begin{center}
\begin{tabular}{|c|c|c|}
\hline
$(S_0, S_1,S_2)$ & $\mathbb{N}\setminus S(\mathcal{L}_{\bf t})$\\
\hline
(1, 1, 2) & \{\}  \\
\hline
(1, 1, 3) & \{\} \\
\hline
(1, 1, 4) & \{\}  \\\hline
(1, 2, 2) & \{\}  \\\hline
(1, 2, 3) & \{\}  \\\hline
(1, 2, 4) & \{\}  \\\hline
(1, 3, 2) & \{\}  \\\hline
(1, 3, 5) & \{2\}  \\\hline
(1, 4, 2) & \{\}  \\\hline
(2, 1, 3) & \{\}  \\\hline
(2, 1, 4) & \{\} \\\hline
(2, 1, 5) & \{\}  \\\hline
(2, 3, 4) & \{1\} \\\hline
\end{tabular}
\captionof{table}{Maps $S$ for which $S(\mathcal{L}_{\bf t})$ has a finite
  complement}\label{tab:finite_complement}
\end{center}

\section{Futher work}
We have just given some examples of infinite words that either have or
do not have the Frobenius property.  In general, we would like to have
a theorem that classifies an infinite word as either having or not
having the Frobenius property based on its abelian complexity.  For
instance, is it true that if ${\bf w}$ has abelian complexity
$\rho_{\bf w}(n) = \Omega(n^r)$ for some $r>0$, or perhaps even $\rho_{\bf
  w}(n) = \Omega(\log n)$, then ${\bf w}$ has the Frobenius property?
What happens when we move to ternary or larger alphabets?

\end{document}